\theoremstyle{plain}
\newtheorem{theorem}{Theorem}
\newtheorem{corollary}{Corollary}
\newtheorem{proposition}{Proposition}
\newtheorem{lemma}{Lemma}
\theoremstyle{remark}
\numberwithin{remark}{section}
\newcommand{\lmatrix}[1]{\left[\begin{array}{#1}}
\newcommand{\rmatrix}{\end{array}\right]}
\newcommand{\half}{\mbox{\tiny$\frac{1}{2}$}}
\newcommand{\E}{\mathbb{E}}
\newcommand{\PP}{\mathsf{P}}
\newcommand{\EP}{\mathbb{E}_{\mathsf{P}}}
\begin{document}

\begin{frontmatter}
\title{Sequential Scoring Rule Evaluation for Forecast Method Selection}
\runtitle{Sequential Forecast Method Selection}

\begin{aug}
\author[A]{\fnms{D. T.}~\snm{Frazier}\ead[label=e1]{david.frazier@monash.edu}}
\author[B]{\fnms{D. S.}~\snm{Poskitt}\ead[label=e2]{Donald.Poskitt@monash.edu}},

\address[A]{Department of Econometrics and Business Statistics, Monash University\printead[presep={,\ }]{e1}}
\address[B]{Department of Econometrics and Business Statistics, Monash University\printead[presep={,\ }]{e2}}
\end{aug}

\begin{abstract}
This paper shows that sequential statistical analysis techniques can be generalised to the problem of selecting between alternative forecasting methods using scoring rules.  A return to basic principles is necessary in order to show that ideas and concepts from sequential statistical methods can be adapted and applied to sequential scoring rule evaluation (SSRE). One key technical contribution of this paper is the development of a large deviations type result for SSRE schemes using a change of measure that parallels a traditional exponential tilting form. Further, we also show that SSRE will terminate in finite time with probability one, and that the moments of the SSRE stopping time exist. A second key contribution is to show that the exponential tilting form underlying our large deviations result allows us to cast SSRE within the framework of generalised e-values. Relying on this formulation, we devise sequential testing approaches that are both powerful and maintain control on error probabilities underlying the analysis. Through several simulated examples, we demonstrate that our e-values based SSRE approach delivers reliable results that are more powerful than more commonly applied testing methods precisely in the situations where these commonly applied methods can be expected to fail. 
\end{abstract}

\begin{keyword}[class=MSC]
\kwd[Primary ]{62M10}
\kwd{62M15}
\kwd[; secondary ]{62G09}
\end{keyword}


\begin{keyword}
\kwd{forecasting method}
\kwd{error probability}
\end{keyword}

\end{frontmatter}

\section{Introduction}

\subsection{Motivation}

In time-series forecasting a basic task that befalls the practitioner is to select from a set of competing alternatives a forecasting system that is likely to deliver the best forecasting
performance. This is commonly done by examining so called out-of-sample loss, defined as the expected value of a measures of the
discrepancy between the predictions and the observed values (the mean squared error for example). The empirical forecaster is therefore concerned with assessing expected performance on as yet unseen data. Researchers also use out-of-sample loss to assess whether a proposed forecasting system outperforms an already established benchmark. Out-of-sample loss is by definition unknown, however, and needs therefore to be estimated.
This is typically done by excluding parts of the observed time-series from the estimation and mimicking the actual process of out-of-sample forecasting by performing
a sequence of predictions for these observations instead. An estimate of the out-of-sample loss is then obtained by
averaging the loss incurred across the individual predictions, \textit{\'{a} la} cross validation. 

Suppose, for example, that the forecaster is asked to provide a probability forecast $p$ for an event $E$ associated with a future value of a random variable of interest $Y$. Key requirements for such a forecast are calibration, meaning that events with a predicted probability of $p$ should occur with a relative frequency of $p$, and sharpness, which requires the forecast probabilities to be informative, i.e. close to 0 or 1. These properties can be assessed using a proper scoring rule \citep{Gneiting2007a}, which in the case of a probability forecast coincides with a consistent scoring function for the mean \citep{Gneiting2011a}. A scoring rule is a function $S(p; y)$ that maps a forecast probability $p$ and an observation $y$ to a numerical score that measures the discrepancy of the forecast, with smaller scores indicating a superior forecast.

To compare two forecasting procedures, $p_t(\tau)$ and $q_t(\tau)$ say, at horizon $\tau\ge1$, the forecaster might collect a sample of $T+\tau$ values of $Y$ and partition the sample into P segments containing training data, in-sample observations used for fitting $p_t(\tau)$ and $q_t(\tau)$, and test data, pseudo out-of-sample observations used for forecast
evaluation. Given $p_t(\tau)$ and $q_t(\tau)$, and the associated observation $y_{t+h}$ for $t = R+1,\ldots,T$, where $R=T+\tau-P$, the average score
difference can be computed as
$$
\Delta_P(p_t(\tau),q_t(\tau))=\frac{1}{P}\sum_{t=R+1}^T[S(p_t(\tau); y_{t+\tau})-S(q_t(\tau); y_{t+\tau})]\,,
$$
and the standardized statistic $\sqrt{P}\Delta_P(p_t(\tau),q_t(\tau))/\widehat{\sigma}$,
where $\widehat{\sigma}^2$ is a consistent estimator of the asymptotic variance of $\Delta_P(p_t(\tau),q_t(\tau))$, is often used to assess the relative accuracy of the forecasts by testing if the average score
difference differs significantly from zero.

A number of tests of the type briefly outlined above are available in the literature, prominent examples being \citet{Diebold1995}, \cite{West1996}, \cite{White2000} and \cite{Hansen2005}, see also the review in \citet{tashman:2000}. Further examples are the martingale-based approaches in \citet{Giacomini2006}, \citet{lai:gross:shen:2011}, and \citet{yen:yen:2021}. A common difficulty with such pseudo out-of-sample evaluation schemes is the resolution of the inevitable trade-off between the size of the data-sets designated as in-sample training data and pseudo out-of-sample test data. The larger is the former the more accurate is the estimation, but the loss is evaluated only on the observations reserved as test data and the smaller is the latter the less precise is the evaluation of the loss. Moreover, the tests mentioned above are all only asymptotically valid. Scarcity of pseudo out-of-sample observations will therefore call into question the accuracy of any asymptotic approximations. Differences in practical implementation can also have a dramatic impact on test performance (see for example \citet{lazuras:lewis:stock:watson:2018} and \citet{frazier:covey:maerin:poskitt:2023}. Furthermore, the tests mentioned above are couched in terms of testing a null hypothesis of \textit{no inferior forecast performance}, so they are more suited to making a comparison of a new method to an established benchmark or state-of-the-art method rather than choosing between two competing specifications. See \citet{Diebold2015} for further discussion of the pros and cons of using pseudo out-of-sample tests. Sequential methods address these issues and (obviously) are ideally suited to the common scenario where data arrives sequentially and the forecaster is required to evaluate forecasts based on a small recently observed set of data.   The contribution of this paper is to show that the apparatus of sequential statistical methods can be adapted to the analysis of scoring rules, thereby offering the forecaster an alternative methodological approach to forecast method selection.  To the best of the authors' knowledge, the extant literature does not contain a discussion that pursues the particular avenue of research that we consider here.

\subsection{Background}

Sequential statistical methods gained prominence in the 1940's during the second world war, leading to the development of a body of theory around the `sequential probability ratio test' (SPRT) due to \citet{wald:1947}. A summary of related work is also presented in \citet{barnard:1946}. Whereas Wald's theory was mainly devoted to the specification of sampling schemes satisfying given requirements, Barnard's work addressed the converse problem of examining the statistical properties of a given sequential scheme. In the ensuing decades considerable research has been conducted on sequential methods, in particular, the martingale theory in \citet{doob:1953} was used to extend Wald's results, including Wald's fundamental identity, to stochastic processes in continuous and discrete time \citep{dvoretzk:kiefe:wolforwitz:1953,robbins:samuel:1966,brown:1969}, and stationary processes with independent increments \citep{hall:1970}. Text book accounts from a decision theoretic viewpoint can be found in \citet{Ferguson1967} and \citet{degroot:1970}. For a review of the theory and application of sequential methods see \citet{wetherill:1975}. The scenario envisaged in this literature is one where a practitioner has available a data series and has to decide which of two hypothesised data generating processes (DGPs) has produced the observed realization. The decision is based on a boundary crossing rule applied to the `probability ratio' (i.e. the likelihood ratio) given by the probability distributions specified for the two DGPs.

The sequential scoring rule evaluation (SSRE) scheme developed in this paper takes a similar perspective to that just described both in terms of the background problem formulation and the use of a boundary crossing stopping rule. However, a fundamental difference between SSRE and SPRT is that SSRE employs a stopping rule that involves the use of a ratio of scoring rules and not a likelihood ratio. Consequently, standard analysis techniques, such as large deviations theory for independent and identically distributed random variables, or martingale theory, can no longer be relied upon.  The main challenges in analysing SSRE lie in the fact that it is not defined in terms of a likelihood ratio, and a return to basic principles is therefore necessary in order to show that ideas and concepts from sequential methods can be adapted and applied to SSRE. The technical contribution of this paper is the establishment of probabilistic results for SSRE schemes using a change of measure that parallels a traditional exponential tilting form. This allows us to provide descriptions of error probabilities in terms of the levels set for the stopping rule. The inferential characteristics of SSRE can thus be specified in advance, and as information from additional time points is accrued an assessment of relative forecast performance can be made that is valid in finite samples. We also show that SSRE will terminate in finite time with probability one, and that the moments of the SSRE stopping time exist. Thus, SSRE provides the forecaster with an assessment tool that allows the forecaster to stop (or continue) forecast evaluation upon seeing sufficient (or not enough) evidence and, more significantly, SSRE is geared towards the task of forecast method selection rather than model testing.

To implement the proposed SSRE testing approach, we show a novel link between SSRE schemes and e-variables. In particular, we show that our proposed SSRE scheme is a universal generalized e-value (GUe-value) under our null hypothesis; GUe-values were proposed and studied in \citet{dey2024anytime} for parameter inference and by \citet{dey2024multiple} for multiple testing problems. As the name suggests, GUe-values are related to both e-values and universal inference. E-variables have recently emerged as a method for conducting multiple hypothesis testing that do not suffer from the same issues as related p-value-based methods; see \citet{vovk:wang:2021}, \citet{grunwald:heide:koolen:2024}, \citet{shafer:2021} (who refers to e-variables as betting scores), and  \citet{wang:ramdas:2022} for full definitions. On the other hand, universal inference methods were  proposed by \citet{wasserman2020universal} as a method to derive calibrated confidence intervals in well-specified models without any of the usual regularity conditions. While exceedingly useful, the method looses correct calibration in misspecified models. One can show that the correct calibration of the confidence sets in universal inference is a consequence of the fact that the criterion that is inverted to form the confidence set in universal inference is an e-process under the assumed model.

Following \citet{ Giacomini2006} we define a forecasting method as being comprised of a forecasting model together with associated techniques that must be specified by the forecaster at the time the forecast is made, such as what estimation procedure to employ and what data to use. In what follows we allow for the use of different estimation procedures in the construction of the forecasts, including parametric and nonparametric methods, as well as limited memory estimators, such as recursive estimators of the exponential smoothing type, fixed width rolling window estimators, or geometrically weighted expanding window estimators that discount less recent observations. These choices can have a significant affect on future forecast performance over and above those due to the specification of the forecasting model. In the derivation of our results we impose minimal conditions on the DGP and the forecast model and method. This accommodates various types of misspecification and uncertainty resulting from the choices made by the forecaster.

\subsection{Organization of the Paper}
Let $Y_1,\dots,Y_{T},\dots$ denote a sequence generated by a stochastic process defined on a probability space $(\mathsf{Y},\mathscr{B}, \mathcal{P})$. We suppose that the goal of the forecaster is to predict some feature of $F_{Y_{T+h}}$, the distribution of $Y_{T+h}$ at a forecast horizon of $p\geq 1$. We are interested therefore in predictions of a functional $\phi[F_{Y_{T+h}}]$ made at time $T$.  We assume that competing forecasts and the observations $Y_t$ are random processes adapted to a filtration $\mathscr{B}_t$, $t \in \mathbb{N}$, and the measure $\mathcal{P}$ describes the joint dynamics of the forecasts and the observations through its characterisation of the distribution of every finite set $\{Y_1,\ldots,Y_n\}$, which we denote by $\mathsf{P}$, $n=1,2,\ldots$ \citep[see][\S\, 48\,\&\,49]{halmos:1950}. Since in general the true measure $\mathcal{P}$, and hence $\mathsf{P}$, is unknown there is no `correct' way to predict features of interest. We assume therefore that the forecaster considers a class of forecast methods $\mathcal{Q}$ that are identified by their distribution functions $Q\in\mathcal{Q}$, and are constructed using data from realizations $y_1,\dots,y_n$, $n=1,2,\ldots$, of the stochastic process.

In the following section of the paper we outline the class of models and methods under consideration, and provide a brief outline of the scoring rules and sequential estimation techniques that underly SSRE. In Section \ref{SSRE} we present some basic concepts and methods of SSRE and relate these to Wald's SPRT. Section \ref{ssree} establishes the relationship of SSRE to e-variables and generalized e-variables. Section \ref{sec:implement} discusses implementation of the proposed SSRE, and compares this approach against the most common approach for testing forecast accuracy across three classes of examples. Section \ref{conclusion} summarises the paper and proofs are assembled in the Appendix.

\section{Preliminaries}\label{sec:acc}

Denote by $F_{\gamma}:\mathsf{Y}\times\Gamma\mapsto C[0,1]$ a probability distribution on $(\mathsf{Y},\mathscr{B})$ indexed by a parameter $\gamma\in\Gamma\subseteq\mathbb{R}^p$ and lying in
a model family $\mathcal{M}(\Gamma):=\{F_{\gamma}:\gamma\in\Gamma\}$. We envisage forecasting settings where the practitioner entertains a collection of different possible statistical models that can describe, with
varying degrees of accuracy, the evolution of the stochastic process. We suppose that each model is specified using a (semi-) parametric family that depends on unknown parameters.

Each of the models is combined with transformations $T_{\upsilon}$ that depend on tuning parameters $\upsilon\in\Upsilon\subseteq\mathbb{R}^q$. The transformations characterise data manipulations and other techniques used in the construction of the forecasts (the choice of window width in a rolling window for example). Given a family of technical transformations, $\mathcal{T}(\Upsilon):=\{T_{\upsilon}:\upsilon\in\Upsilon\}$, and a model family $\mathcal{M}(\Gamma)$, the class of probability distributions used for prediction is the composition $Q_\theta:\mathsf{Y}\times\Theta\mapsto C[0,1]$ where (with a slight abuse of notation) $M_\gamma\circ T_\upsilon:(Y_1,\cdots,Y_{T})\mapsto Q_\theta$. We denote by $\mathcal{Q}(\Theta)$ the class of forecast methods $\{Q_\theta:\theta\in\Theta=\Gamma\times\Upsilon\}$. Generally, the model parameters of $Q_\theta$ are unknown and must be estimated, and the tuning parameters are assigned by the practitioner using data directed selection criteria or ``rules of thumb".

For any $1\le n\le T$, let $\Omega_{n}\supseteq\mathscr{B}_n$ denote the information available to the forecaster at time $n$. The forecasters prediction is then given by the functional $\phi[Q_{\theta}(Y_{T+h}|\Omega_T)]$, where
$Q_{\theta}(Y_{T+h}|\Omega_T)$ denotes the distribution of the random variable $Y_{T+h}$ derived from the forecasting method conditional on the information $\Omega_T$. Throughout, we consider that the forecaster wishes to obtain `optimal' forecasts in the spirit of \cite{Gneiting2007}, \cite{Gneiting2011}, and \cite{Martin2022}, meaning that the forecasting method the forecaster chooses to employ out-of-sample is produced by targeting a scoring rule that measures precisely the feature or features of $Y_{T+h}$ that are of interest.

\subsection{Scoring Rules}

Recall that $\mathcal{Q}$ is a class of distributions on $\mathsf{Y}$, and a scoring rule is a measurable function $S(f; y)$ that maps a forecast $f$ and an observation $y$ to a numerical value in $[0,\infty)$ that measures the discrepancy of the forecast, with smaller scores indicating a superior forecast. The functional $\phi:\mathcal{Q}\mapsto\Phi$ maps a distribution $Q\in\mathcal{Q}$ to a
subset $\phi[Q]$ of the feature space $\Phi$, $Q\mapsto\phi[Q]\subseteq \Phi$. A scoring function $S(\cdot,\cdot)$ is said to be $\mathcal{Q}$ consistent or proper for a functional $\phi[\cdot]$ if
\begin{flalign*}
\mathbb{E}_Q\left\{S\left(\phi[Q],Y\right)\right\}\leq \mathbb{E}_Q\left[S(\phi[P],Y)\right]\,,
\end{flalign*}
for all $P,Q\in\mathcal{Q}$, where $Y$ is a random variable with distribution $Q$. Here and throughout $\mathbb{E}_{\mathsf{P}}[X]$ denotes the expectation of the random variable $X$ under $\mathsf{P}$, and we assume that any stated expectation exists and is finite. We say that $S(\cdot,\cdot)$ is strictly $\mathcal{Q}$ consistent or proper for $\phi[\cdot]$ if
\begin{equation*}
\mathbb{E}_Q\left\{S\left(\phi[Q],Y\right)\right\}= \mathbb{E}_Q\left[S(\phi[P],Y)\right]
\end{equation*}%
implies that $\phi[P]=\phi[Q]$. If $\phi[\cdot]$ admits a strictly consistent or proper scoring function, then it is
called \textit{elicitable}. Here and throughout, we will assume that the functional of interested is \textit{elicitable}.

Scoring rules can be used to measure the accuracy of a forecast distribution itself (i.e. $\phi[f]=f$) and they can also be used to measure the accuracy of features of the distribution such as
quantiles, or predictive intervals (see, \citealp{Gneiting2007}). \citep{Gneiting2007a} , \cite{Gneiting2011a}, \cite{Gneiting2011}. Some functionals are not elicitable however, at least not on their own, see \citet{brehmer:gneiting:2021} and \citealp{Fissler2016}. See also \cite{Patton2020} for a discussion of the sensitivity of consistent scoring rules to model misspecification, parameter estimation error, and nonnested information sets.

\subsection{Parameter Estimation}

\label{subsubsec:estforecomb}

Given a realisation of $n$ observations $\{y_t:1\le t\le n\}$, $n\le T-h$, the optimal functional predictive can generally be produced in two possible ways;
\begin{itemize}
  \item[i.] by searching for the most accurate predictive distribution $Q_\theta\in\mathcal{Q}$ under a given loss function $L:\mathcal{Q}\times\mathsf{Y}\mapsto \mathbb{R}$ and substituting into $\phi$ to give $\phi[Q_{\widehat{\theta}_n}(Y_{T+h}|\Omega_T)]$ where
  \begin{equation*}
\widehat\theta_n=\operatornamewithlimits{argmin\,}_{\theta\in\Theta}%
\sum_{t=1}^{n}w_tL[Q_{\theta}(Y_{t+h}|\Omega_t),y_{t+h}]\,,  \label{eq:1step}
\end{equation*}%
or similarly
  \item[ii.] by searching for the most accurate functional predictive $\phi[Q_\theta]$, $Q_\theta\in\mathcal{Q}$, under a given scoring rule and evaluating $\phi[Q_{\widetilde{\vartheta}_n}(Y_{T+h}|\Omega_T)]$ directly where
  \begin{equation*}
\widetilde{\vartheta}_n=\operatornamewithlimits{argmin\,}_{\theta\in\Theta}%
\sum_{t=1}^{n}w_tS[\phi[Q_{\vartheta}(Y_{t+h}|\Omega_t)],y_{t+h}]\,.
\end{equation*}
\end{itemize}
Here $w_t$, $t \in \mathbb{N}$, is a $\mathscr{B}_t$-measurable weighting sequence that is known at the time of forecasting. For example, if $w_t=1$, $t=n-\omega,\ldots,n$, $w_t=0$, $t<n-\omega$, $\omega<n$, then the estimate is based on a data window of width $\omega+1$, whereas $w_t=\lambda^{n-t}$, $t=1,\ldots,n$, $0<\lambda<1$, corresponds to a geometrically weighted expanding window. Practitioners  might also be interested in whether a particular forecast method is more accurate than another given that certain conditions apply. Choosing weights such that $w_t = \rho_t$ if the conditions hold and $w_t = 1-\rho_t$ otherwise, $0<\rho_t<1$, might addresses this question. The requirement that the weights be $\mathscr{B}_t$-measurable reflects that knowledge that one method is preferable to another under given conditions is only useful if the conditions are known ex-anti.

Given a particular forecast method class $\{Q_\theta:\theta\in\Theta=\Gamma\times\Upsilon\}$, the result of either i. or ii. is an optimal forecast distribution, where optimality is assessed with respect to the chosen loss function or scoring rule. We have used the notation $\widehat{\theta}_n$ and $\widetilde{\vartheta}_n$ for the parameter estimator in the two cases since the estimates and the optimal forecasts will not coincide in general even if the forecast model is the same. Any differences between the two will be due to differences in the estimation technique and the forecasting method as apposed to the forecasting model, hence the focus in our analysis on the former rather than the latter.

In the sequential statistical analysis that follows the information used for estimation is increased from $\Omega_n$ to $\Omega_{n+1}$, and according to the value obtained by a scoring rule, $\widehat{\theta}_n$ (respectively $\widetilde{\vartheta}_n$), which are based on $n$ observations, will need to be updated to $\widehat{\theta}_{n+1}$ (respectively $\widetilde{\vartheta}_{n+1}$).\footnote{The use of parameter updates corresponds to the approach adopted in \cite{West1996}. } This can be done recursively using a Newton-Raphson type algorithm. Consider the case where $\widehat{\theta}_n$ is obtained using a geometrically weighted expanding window with tuning parameter $\lambda$. Let $\ell_t(\theta)=L[Q_{\theta}(Y_{t+h}|\Omega_t),y_{t+h}]$, or $\ell_t(\theta)=S[\phi[Q_{\theta}(Y_{t+h}|\Omega_t)],y_{t+h}]$, and suppose that $\ell_t(\theta)$ is twice continuously differentiable, $t=1,\ldots,n+1$. Set $\mathcal{L}_n(\theta)=\sum_{t=1}^{n}\lambda^{n-t}\ell_t(\theta)$. Since by definition of $\widehat{\theta}_n$ we have $\partial\mathcal{L}_{n}(\widehat{\theta}_{n})/\partial\theta=0$, it follows from the Taylor-Young formula that
\begin{align*}
\frac{\partial\mathcal{L}_{n+1}(\widehat{\theta}_{n+1})}{\partial\theta}=&\frac{\partial\mathcal{L}_{n+1}(\widehat{\theta}_n)}{\partial\theta}+\frac{\partial^2\mathcal{L}_{n+1}(\widehat{\theta}_n)}{\partial\theta\partial\theta'}(\widehat{\theta}_{n+1}-\widehat{\theta}_n)
+o(\|\widehat{\theta}_{n+1}-\widehat{\theta}_n\|)\\
 =&\frac{\partial \ell_{n+1}(\widehat{\theta}_n)}{\partial\theta}+H_{n+1}(\widehat{\theta}_n)(\widehat{\theta}_{n+1}-\widehat{\theta}_n)+o(\|\widehat{\theta}_{n+1}-\widehat{\theta}_n\|)\\
 =&0
\end{align*}
where $H_{n+1}(\theta)=\partial^2\mathcal{L}_{n+1}(\theta)/\partial\theta\partial\theta'$. If we assume that for $n$ sufficiently large $\Pr(\|\widehat{\theta}_{n+1}-\widehat{\theta}_n\|<\epsilon)\geq 1-\epsilon$, $\epsilon>0$, and $H_{n+1}(\theta)$ is positive definite at $\widehat{\theta}_n$, which will be the case if $\mathcal{L}_{n+1}(\theta)$ is strictly convex in a neighbourhood of $\widehat{\theta}_{n+1}$, we obtain the stochastic approximation
$$
\widehat{\theta}_{n+1}\approx\widehat{\theta}_n-[H_{n+1}(\widehat{\theta}_n)]^{-1}\partial \ell_{n+1}(\widehat{\theta}_n)/\partial\theta\,.
$$
Starting at an initial value $\widehat{\theta}_{n_0}=\operatornamewithlimits{argmin\,}_{\theta\in\Theta}\mathcal{L}_{n_0}(\theta)$ the previous approximation can be used to obtain $\widehat{\theta}_{n+1}$ recursively for $n\geq n_0$ together with $H_{n+1}(\widehat{\theta}_n)=\lambda H_{n}(\widehat{\theta}_n)+\partial^2\ell_{n+1}(\widehat{\theta}_n)/\partial\theta\partial\theta'$. If $H_{n+1}(\theta)$ is not positive definite it can be replaced with $H_{n+1}(\theta)+\rho_n I$ where $\rho_n$ is chosen at each update, giving an interpolation of Newton-Raphson with the steepest descent method, or by replacing $H_{n+1}(\theta)$ with a matrix that satisfies the so called Newton condition to yield a version akin to a Broyden-Fletcher-Goldfarb-Shanno algorithm. \citep[We refer the reader interested in numerical implementation to][for more detailed particulars]{nocedal:wright:2006}.

\section{Sequential Scoring Rule Evaluation and Termination}\label{SSRE}

\subsection{SSRE Evaluation}\label{essre}
Let $\phi[Q_{\widehat{\theta}_n}(Y_{n+\tau}|\Omega_n)]$, $Q\in\mathcal{Q}$ and $\phi[P_{\widetilde{\vartheta}_n}(Y_{n+\tau}|\Omega_n)]$, $P\in\mathcal{P}$, denote two alternative forecast methods used to forecast the functional $\phi$, which for ease of reference we designate as method Q and method P. Set
$$
R_n(Q,P)=\frac{e^{S(\phi[Q_{\widehat{\theta}_n}(Y_{n+\tau}|\Omega_n)])}}{e^{S(\phi[P_{\widetilde{\vartheta}_n}(Y_{n+\tau}|\Omega_n)])}}\,.
$$
where, without loss of generality, we will often consider $\tau=1$.
Since the scoring rule is negatively orientated, i.e., lower is better, method Q is deemed to be at least as good as method P if $R_n(Q,P)\leq 1$, and vise versa. Define $C_n(Q,P)=\prod_{m=1}^nR_m(Q,P)$, and note that $C_n(Q,P)$ can be used to gauge the accumulation of evidence as more observations accumulate.

For given methods Q and P the performance of the SSRE scheme will be governed by the values of two boundary constants $k_l$ and $k_u$, and the true DGP. The influence of the DGP on the behaviour of SSRE will be formulated here in terms of the impact of the true unknown distribution $\mathsf{P}$ on the ratio of scoring rules, as measured by $R_n(Q,P)$. Denote by $\mathbb{P}$ the set of probability measures on $(\mathsf{Y},\mathscr{B})$, and consider a sequence of natural filtrations $\mathscr{B}_1\subseteq\cdots \subseteq\mathscr{B}_n$, with $Y_n$ adapted to $\mathcal{B}_n$ for all $n\ge1$. We will analyse the SSRE under the following hypotheses,
\begin{align*}
\mathcal{H}_Q =& [\mathcal{P}\in \mathbb{P} : \mathbb{E}_{\mathsf{P}}[R_n(Q,P)\mid\mathscr{B}_n]\leq 1\,,  (n\ge1)]\quad\mbox{and} \\
\mathcal{H}_P =&[\mathcal{P}\in \mathbb{P} : \mathbb{E}_{\mathsf{P}}[R_n(Q,P)\mid\mathscr{B}_n]\geq 1\,,  (n\ge1)]\,.
\end{align*}

The hypotheses state that given the information available at the time of forecasting, forecast method Q is expected to be at least as good as method P under scoring rule $S$, or vice versa.\footnote{\citet{zhu:timmermann:2020} have suggested that the null hypothesis of \textit{no inferior forecast performance} ($\mathcal{H}_Q\cap \mathcal{H}_P$) used in the forecast tests referenced \textit{op. cit.} is unlikely to ever be satisfied in realistic settings.} Indeed, for each $n$, the hypothesis $\mathcal{H}_Q$ implies that
$$
1\geq \mathbb{E}_{\mathbb{P}}[R_n(Q,P)\mid\mathscr{B}_n]\geq \exp\{\mathbb{E}_{\mathbb{P}}[\log(R_n(Q,P))\mid\mathscr{B}_n]\},
$$
where the lower bound follows by Jensen's inequality, so that taking logarithms yields the usual null hypothesis of mean forecast dominance commonly employed in the literature
\begin{equation}\label{usualnull}
0\geq \mathbb{E}_{\mathbb{P}}[\log(R_n(Q,P))\mid\mathscr{B}_n]=\mathbb{E}_{\mathbb{P}}[S(\phi[Q])-S(\phi[P])\mid\mathscr{B}_n].
\end{equation}
The hypotheses $\mathcal{H}_Q$ and $\mathcal{H}_P$ imply (slightly more than) mean dominance at all time points. According to whether $\mathcal{H}_Q$ or $\mathcal{H}_P$ does or does not hold, obviously, evidence in favour or against $\mathcal{H}_Q$ or $\mathcal{H}_P$ will accumulate in $C_n(Q,P)$ as $n$ increases, and thus that a selection of method Q or method P should eventually be made.  Ultimately, our goal is to gauge which method has more empirical support given our observed data.

Given a sequence of observations $\{Y_n: n\geq 1\}$, arriving sequentially through time, we measure the evidence for or against $\mathcal{H}_Q$ using
\begin{flalign*}
C_n(Q,P)=\prod_{m=1}^{n}R_m(Q,P)=&\exp\left\{\sum_{m=1}^{n}\left[S(\phi[Q],Y_{m+\tau})-S(\phi[P],Y_{m+\tau})\right]\right\}\\=&\exp\left\{\sum_{m=1}^{n}D_m[Q,P]\right\}.
\end{flalign*}
If $\mathcal{H}_Q$ is satisfied at each $n\geq1$, then $\mathbb{E}_{\mathbb{P}}[C_n(Q,P)\mid\mathscr{B}_n]\leq 1$, and vice versa under $\mathcal{H}_P$. To gauge the empirical support for  
{$\mathcal{H}_Q$ (or $\mathcal{H}_P$)}, we must consider when $C_n(Q,P)$ is small enough (or large {enough}) to support $\mathcal{H}_Q$ (or $\mathcal{H}_{P}$). To this end, define the events $E^Q_n$, $E^P_n$ and $E^{Q\cap P}_n$, for each $n$, by the inequalities
$$
C_n(Q,P)\leq k_l\,,\quad C_n(Q,P)\geq k_u\quad\mbox{and}\quad k_l<C_n(Q,P)<k_u\,,\quad 0<k_l<1<k_u.
$$
The formulation of $C_n(Q,P)$ ensures that, for any $n\geq1$, $C_n(Q,P)\in\mathbb{R}_+$, so that we can view the constants $k_l$ and $k_u$ as defining regions of plausibility for $\mathcal{H}_Q$ or $\mathcal{H}_P$. In particular, the formulation of $C_n(Q,P)$ and the specification of $\mathcal{H}_Q$ and $\mathcal{H}_P$ is such that the following diagram illustrates how the boundary constants $k_l$ and $k_u$ determine regions where $C_n(Q,P)$ favours either $\mathcal{H}_Q$ or $\mathcal{H}_P$.

\vspace{0.5cm}

\begin{figure}[!h]
	\centering

\begin{tikzpicture}
\centering
\tikzset{
	position label/.style={
		below = 3pt,
		text height = 1.5ex,
		text depth = 1ex
	},
	brace/.style={
		decoration={brace, mirror},
		decorate
	}
}
\draw (0,0) -- (10,0);
\foreach \x in {0,2.3,4,6,10}
\draw (\x cm,3pt) -- (\x cm,-3pt);

\node [position label] (cStart) at (0,0) {$ 0 $};
\node [position label] (cA) at (2.3,0) {$ k_l $};
\node [position label] (cB) at (4,0) {$ 1 $};
\node [position label] (cD) at (6,0) {$ k_u$};
\node [position label] (cEnd) at (10,0) {$\infty$};

\draw [brace] (cStart.south) -- node [position label, pos=0.5] {$C_n(Q,P)$ favours $\mathcal{H}_Q$} (cA.south);
\draw [brace,decoration={raise=4ex}](cD.south) -- node [position label,yshift=-4ex] {$C_n(Q,P)$ favours $\mathcal{H}_P$} (cEnd.south);
\draw [decorate,decoration={brace,mirror,raise=-8ex}](cA.south) -- node [position label,yshift=12ex] {SSRE indifference region} (cD.south);
	
\end{tikzpicture}
\caption{Regions of favorability for $\mathcal{H}_Q$ and $\mathcal{H}_P$ in terms of the SSRE boundary constants $k_l$ and $k_u$.}
\end{figure}
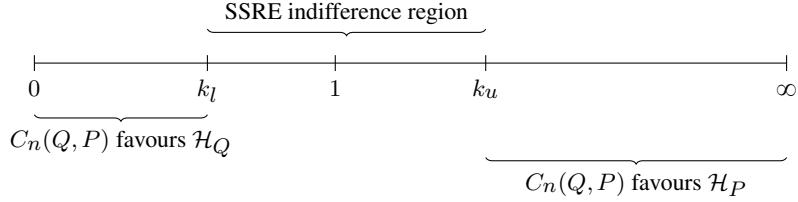

Define a SSRE decision rule by reference to the preference-indifference regions
$$
(y_1,\ldots,y_n)\in\left\{
\begin{array}{ll}
	E^Q_n, & \hbox{at time point $n$ select method Q and cease SSRE;} \\
	E^P_n, & \hbox{at time point $n$ select method P and cease SSRE;} \\
	E^{Q\cap P}_n, & \hbox{at time point $n$ add data point $y_{n+1}$ and continue SSRE.}
\end{array}
\right.
$$
The $2n+1$ events $E^Q_1,\ldots,E^Q_n$, $E^P_1,\ldots,E^P_n$ and $E^{Q\cap P}_n$ form a disjoint partition of the sample space of $Y_1,\ldots,Y_n$, which together with the previous preference-indifference decision rule, define a SSRE scheme for selecting a preferred forecasting method.

In a typical testing setting, one would accept or reject $\mathcal{H}_{Q}$ (or $\mathcal{H}_P$) according to some measure of statistical evidence. However, the nature of $\mathcal{H}_Q$ and $\mathcal{H}_P$, and the sequential nature of data arrival, makes the specification of such criteria cumbersome. In particular, the regions that define the SSRE scheme, which are represented in terms of $k_l$ and $k_u$, govern the performance of the scheme. Before examining the relationship between SSRE and the related literature on E-variables, we will first analyse more closely how the choice of boundary values $k_l$ and $k_u$ influences features of SSRE.
%
%
\subsection{Termination of SSRE}\label{termination}
The SSRE process terminates at the smallest integer $n$ for which the inequality $ k_l< C_n(Q,P)< k_u$ fails to hold. Thus $E^{Q\cap P}_n$ is the event resulting in the outcome $N>n$ where $N$ is the random variable denoting the SSRE stopping time. The probability that the SSRE process terminates equals
$$
  \Pr(N<\infty)= \lim_{n\rightarrow\infty}\Pr(N<n)=1-\lim_{n\rightarrow\infty}\Pr(N\geq n)\,.
$$
Thus, if $\lim_{n\rightarrow\infty}\Pr(N\geq n)>0$ the SSRE process will have a positive probability of continuing indefinitely. The following result shows that, in fact, the SSRE stopping time $N$ is finite with probability one, and the tail of its distribution declines towards zero at an exponential rate, under very weak conditions.
\begin{proposition}\label{prop1}
If under $\mathcal{H}_Q$ or $\mathcal{H}_P$ the distribution of $C_n(Q,P)=\prod_{m=1}^nR_m(Q,P)$ is nondegenerate and $\Pr(E^Q_n)\neq 0$ or $\Pr(E^P_n)\neq 0$, then for all $n\geq n_\epsilon+1$ there exists a $\varrho_\epsilon$ with $0<\varrho_\epsilon<1$ such that $\Pr(N>n)<\exp\{n_\epsilon\log(\varrho_\epsilon)\}$ and the SSRE process will terminate with probability one.
\end{proposition}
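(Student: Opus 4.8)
The plan is to exploit a standard block-argument from sequential analysis, adapted to the multiplicative process $C_n(Q,P) = \prod_{m=1}^n R_m(Q,P)$. The key observation is that, writing $Z_m = \log R_m(Q,P) = D_m[Q,P]$, the event $\{N > n\}$ forces the partial sums $\sum_{m=1}^k Z_m = \log C_k(Q,P)$ to remain strictly inside the bounded interval $(\log k_l, \log k_u)$ for every $k \le n$. First I would fix $\epsilon > 0$ and choose an integer $n_\epsilon$ large enough that the nondegeneracy hypothesis guarantees a fixed gap: namely, that over a block of $n_\epsilon$ consecutive increments the cumulative change $\sum_{m=j+1}^{j+n_\epsilon} Z_m$ has, with probability at least some $\pi_\epsilon > 0$, absolute value exceeding $\log k_u - \log k_l$. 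This is where the hypotheses $\Pr(E^Q_n)\neq 0$ or $\Pr(E^P_n)\neq 0$ enter: they ensure $C_n$ can actually reach a boundary, and nondegeneracy of $C_n(Q,P)$ rules out the increments being almost surely too small to ever escape, so some finite block length $n_\epsilon$ with the required escape probability exists.

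Next I would partition the time axis $\{1,2,\ldots,n\}$ into $\lfloor n/n_\epsilon \rfloor$ disjoint consecutive blocks of length $n_\epsilon$. If the process has not stopped by time $n$, then in particular within \emph{each} block the partial sum could not have moved by more than $\log k_u - \log k_l$ (otherwise it would have crossed a boundary and terminated). Conditioning successively on the information $\mathscr{B}_{j n_\epsilon}$ at the start of each block and using the tower property, the probability that \emph{all} $\lfloor n/n_\epsilon\rfloor$ blocks fail to produce an escape is at most $(1 - \pi_\epsilon)^{\lfloor n/n_\epsilon \rfloor}$. Setting $\varrho_\epsilon = (1-\pi_\epsilon)^{1/n_\epsilon} \in (0,1)$ and absorbing the floor function into constants gives $\Pr(N > n) < \varrho_\epsilon^{\,n - n_\epsilon} = \exp\{(n - n_\epsilon)\log\varrho_\epsilon\}$, which for $n \ge n_\epsilon + 1$ is bounded by $\exp\{n_\epsilon \log \varrho_\epsilon\}$ after the obvious monotonicity adjustment (or, more cleanly, one states the bound as $\exp\{(n-n_\epsilon)\log\varrho_\epsilon\}$ and notes this matches the claimed form up to relabelling). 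Since $\log\varrho_\epsilon < 0$, letting $n\to\infty$ yields $\Pr(N \ge n) \to 0$, hence $\Pr(N < \infty) = 1$, and the exponential tail bound is exactly the claimed rate.

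The main obstacle is the first step: establishing that a single block of some fixed finite length $n_\epsilon$ has a \emph{uniformly} positive escape probability $\pi_\epsilon$, depending only on $\epsilon$ (equivalently, on the boundary width $\log k_u - \log k_l$) and not on where the block sits in time or on the past. Because the $Z_m$ are not assumed i.i.d.\ — indeed they depend on sequentially re-estimated parameters $\widehat\theta_m$, $\widetilde\vartheta_m$ and on the dependent DGP $\mathcal{P}$ — one cannot simply invoke a law of large numbers or a classical random-walk escape estimate. The argument instead has to lean on the nondegeneracy assumption in a quantitative form: there must be some $\delta > 0$ and some block length $n_\epsilon$ such that $\inf_{j\ge 0} \Pr\bigl(|\sum_{m=j+1}^{j+n_\epsilon} Z_m| > \log k_u - \log k_l \,\big|\, \mathscr{B}_{j}\bigr) \ge \delta$ almost surely; I would spell out that "nondegenerate distribution of $C_n(Q,P)$" together with $\Pr(E^Q_n)\neq 0$ or $\Pr(E^P_n)\neq 0$ is precisely what delivers such a $\delta$, since if no such block length existed the accumulated product $C_n$ would be pinned inside $(k_l,k_u)$ with probability bounded away from zero for all $n$, forcing degeneracy in the limit and contradicting the ability of $C_n$ to hit a boundary. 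Once that uniform lower bound is in hand, the block-multiplication is routine.
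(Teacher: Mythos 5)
Your proof takes a genuinely different route from the paper's. The paper bounds $\Pr(N>n)$ by the product of the \emph{marginal} continuation probabilities $\prod_{m=1}^{n}p_m$ with $p_m=\Pr(k_l<C_m(Q,P)<k_u)<1$, shows via Cauchy's criterion for infinite products that this product cannot converge to a positive limit (since that would force $p_n\to1$), and then extracts the geometric rate by setting $\varrho_\epsilon$ equal to the geometric mean of the first $n_\epsilon$ factors. You instead run the classical Stein-type block argument: partition time into blocks of length $n_\epsilon$, claim a uniform positive conditional escape probability $\pi_\epsilon$ per block, and multiply. That is the standard and, in the i.i.d.\ SPRT setting, fully rigorous route, and it would yield a cleaner quantitative statement than the paper's.

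However, there is a genuine gap at exactly the point you flag as the ``main obstacle,'' and your proposed resolution does not close it. The block argument needs an \emph{almost-sure, uniform-in-$j$} lower bound on the \emph{conditional} escape probability, $\inf_{j}\Pr\bigl(\bigl|\sum_{m=j+1}^{j+n_\epsilon}Z_m\bigr|>\log k_u-\log k_l \mid \mathscr{B}_{j}\bigr)\ge\delta>0$ a.s., in order for the tower-property multiplication $(1-\pi_\epsilon)^{\lfloor n/n_\epsilon\rfloor}$ to go through. The hypotheses of the proposition --- nondegeneracy of the \emph{unconditional} distribution of $C_n(Q,P)$ and $\Pr(E^Q_n)\neq0$ or $\Pr(E^P_n)\neq0$ --- are statements about marginal laws only. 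They are compatible with a process whose conditional block-escape probabilities decay to zero along a set of histories of positive probability (the increments here involve sequentially re-estimated parameters and a dependent DGP, so nothing forces time-homogeneity of the conditional laws). Your justification that the absence of such a $\delta$ would ``force degeneracy in the limit'' is circular: nondegeneracy of each marginal $C_n$ does not preclude the conditional escape probabilities from vanishing on part of the sample space, and if they do, the product bound fails. To make your route work you would need to \emph{assume} the uniform conditional bound (or something like a uniform conditional variance lower bound on the $Z_m$), which is a strictly stronger hypothesis than the proposition states. The paper's route sidesteps conditional probabilities entirely by working with the marginal $p_m$, which is precisely what its stated hypotheses control.
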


If the SSRE process terminates with probability one then the expected value of $N$, the average number of time points required before either forecast method Q or forecast method P is selected, will be of interest. The following result indicates that all moments of $N$ will in fact exist under the same conditions as for Proposition \ref{prop1}.
\begin{corollary}\label{corol1}
Assume that the conditions of Proposition \ref{prop1} hold. Then the moment generating function $\psi_N(s)=\mathbb{E}[\exp(Ns)]$ exists for all $s$ in a neighbourhood of zero.
\end{corollary}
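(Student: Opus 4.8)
The plan is to obtain the corollary directly from the exponential tail bound for the stopping time $N$ supplied by Proposition \ref{prop1}. First I would repackage that bound in a genuinely geometric (per-step) form: for a fixed admissible $\epsilon$, Proposition \ref{prop1} furnishes a rate $\varrho_\epsilon \in (0,1)$ together with the threshold $n_\epsilon$, and since $n_\epsilon$ grows proportionally with $n$ (it counts the completed blocks on which the "no termination" events are chained), the bound $\Pr(N>n)<\exp\{n_\epsilon\log\varrho_\epsilon\}$ can be rewritten, after passing to a single per-step rate $\varrho\in(0,1)$ and absorbing the finitely many indices $n\le n_\epsilon$ and the block edge-effects into a constant $C_\epsilon<\infty$, as $\Pr(N>n)\le C_\epsilon\varrho^{\,n}$ for every $n\ge1$. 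Proposition \ref{prop1} also gives $\Pr(N<\infty)=1$, so $e^{sN}$ is almost surely well defined and $\psi_N$ is a bona fide expectation.

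Next I would evaluate $\psi_N(s)=\mathbb{E}[e^{sN}]$. For $s\le 0$ one has $e^{sN}\le 1$ because $N\ge 0$, hence $\psi_N(s)\le 1<\infty$ with nothing further to check. For $s>0$, I would apply the Abel-summation identity for a nonnegative integer-valued random variable,
\begin{equation*}
\mathbb{E}[e^{sN}] = 1 + (e^{s}-1)\sum_{n=1}^{\infty} e^{s(n-1)}\,\Pr(N \ge n),
\end{equation*}
(the interchange of summations being legitimate by nonnegativity), and insert $\Pr(N\ge n)=\Pr(N>n-1)\le C_\epsilon\varrho^{\,n-1}$. The series is then dominated by $C_\epsilon\sum_{j\ge0}(e^{s}\varrho)^{j}$, which converges exactly when $e^{s}\varrho<1$, i.e.\ for $s<\log(1/\varrho)$. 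Therefore $\psi_N(s)<\infty$ for every $s\in(-\infty,\log(1/\varrho))$, an interval containing an open neighbourhood of the origin, which is the assertion. All integer moments of $N$ then follow immediately from the elementary bound $x^{k}\le k!\,\delta^{-k}e^{\delta x}$, valid for $x\ge0$ and any $\delta\in(0,\log(1/\varrho))$, since $\mathbb{E}[N^{k}]\le k!\,\delta^{-k}\psi_N(\delta)<\infty$.

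I do not anticipate a substantive obstacle, as the corollary is essentially a repackaging of Proposition \ref{prop1}: once the tail is geometric, the moment generating function is controlled by a convergent geometric series. The one point that needs genuine care is the opening step, namely extracting from the proof of Proposition \ref{prop1} a clean per-step rate $\varrho$ (and constant $C_\epsilon$) out of the block-indexed quantity $\exp\{n_\epsilon\log\varrho_\epsilon\}$, and keeping track of how the resulting radius $\log(1/\varrho)$ of the neighbourhood on which $\psi_N$ is finite depends on the block length used in that argument. With that conversion in hand, the geometric-series estimate above closes the proof.
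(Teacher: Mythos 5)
Your proof is correct and follows essentially the same route as the paper: both arguments convert the bound from Proposition \ref{prop1} into a geometric tail $\Pr(N>n)\lesssim\varrho^{\,n}$ and then control $\mathbb{E}[e^{sN}]$ by a convergent geometric series, obtaining finiteness for all $s<-\log\varrho$. The only differences are cosmetic --- you organise the sum via the tail-sum (Abel) identity whereas the paper telescopes $\Pr(N=n)=\Pr(N>n-1)-\Pr(N>n)$ after splitting off the first $n_\epsilon$ terms --- and the step you rightly flag as delicate (extracting a clean per-step rate from the block-indexed bound $\exp\{n_\epsilon\log\varrho_\epsilon\}$) is handled in the paper by an equally informal induction on $k$, so your proposal is not missing anything the paper's proof supplies.
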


\noindent
Since the moment generating function of $N$ is convergent in a region containing the origin it follows that $\psi_N(s)$ is continuously differentiable at $s=0$ and hence that $E[N^r]$ exists for $r=1,2,\ldots$.

Proposition \ref{prop1} and its corollary generalise to SSRE results known to hold for the standard SPRT \citep[a test of a simple null hypothesis against a simple alternative using the likelihood ratio statistic from independent and identically distributed data,][]{wetherill:1975}. Here however the hypotheses $\mathcal{H}_Q$ and $\mathcal{H}_P$ are highly composite, and simplicity of the hypotheses is replaced by the condition that the statistic $C_n(Q,P)$ that replaces the likelihood ratio is based on a consistent (proper) scoring rule for the feature of interest. More importantly, $\mathcal{H}_Q$ and $\mathcal{H}_P$ do not specify the structure of the stochastic process giving rise to the observed series, which is usually not well enough understood to formulate a suitable stochastic process model for the DGP. The forecaster is therefore at liberty to apply SSRE to any forecasting method and data that they choose.


\subsection{Boundary Values $k_l$ and $k_u$}\label{dbvk}
The probability of selecting method Q at time point $n$, $\Pr(E^Q_n|\mathcal{H}_a)$, or selecting method P, $\Pr(E^P_n|\mathcal{H}_a)$, or of observing an additional value of the series, $\Pr(E^{Q\cap P}_n|\mathcal{H}_a)$, obviously sum to one under each of $\mathcal{H}_a$, $a=Q,P$. Furthermore, we have already noted that the $2n+1$ events $E^Q_1,\ldots,E^Q_n$, $E^P_1,\ldots,E^P_n$ and $E^{Q\cap P}_n$ also form a disjoint partition of the sample space of $Y_1,\ldots,Y_n$, so
\begin{align*}
  \Pr(\bigcup_{i=1}^nE^Q_{i}|\mathcal{H}_a) & = \sum_{i=1}^n\Pr(E^Q_i|\mathcal{H}_a)\,, \\
 \Pr(\bigcup_{i=1}^nE^Q_n|\mathcal{H}_a) & =\sum_{i=1}^n\Pr(E^P_i|\mathcal{H}_a)\quad\mbox{and} \\
  \Pr(E^{Q\cap P}_n|\mathcal{H}_a) & =\Pr(N>n|\mathcal{H}_a)\,,
\end{align*}
are non-negative and also sum to one. Since $\Pr(\bigcup_{i=1}^nE^Q_{i}|\mathcal{H}_a)$ and $\Pr(\bigcup_{i=1}^nE^P_{i}|\mathcal{H}_a)$ are nondecreasing and fall in the interval $[0,1]$ it follows that under $\mathcal{H}_Q$ or $\mathcal{H}_P$ both
$$
\Pr(E^Q_{\infty}|\mathcal{H}_a)=\lim_{n\rightarrow\infty}\Pr(\bigcup_{i=1}^nE^Q_{i}|\mathcal{H}_a)\quad\mbox{and}\quad\Pr(E^P_{\infty}|\mathcal{H}_a)=\lim_{n\rightarrow\infty}\Pr(\bigcup_{i=1}^nE^P_{i}|\mathcal{H}_a)
$$
exist. These limits give the probability of selecting method Q and the probability of selecting method P, respectively. Although these may be of theoretical interest, the forecaster will presumably be far less concerned with probabilities that stretch out into the infinite future than they will be with properties of SSRE schemes over finite time horizons. Moreover, Proposition \ref{prop1} establishes that the SSRE process will terminate with probability one under $\mathcal{H}_Q$ or $\mathcal{H}_P$, i.e. $\Pr(E^{Q\cap P}_{\infty}|\mathcal{H}_a)=\lim_{n\rightarrow\infty}\Pr(N>n|\mathcal{H}_a)=0$, $a=Q,P$.

Consider then the mutually exclusive events
$$
\mathcal{T}^Q_{n}=\{\bigcap_{i=1}^{n-1} E^{Q\cap P}_{i}\}\bigcap E^Q_n,\quad \mathcal{T}^P_{n}=\{\bigcap_{i=1}^{n-1}E^{Q\cap P}_{i}\}\bigcap E^P_n\quad\mbox{and}\quad \mathcal{C}^{Q\cap P}_{n}=\bigcap_{i=1}^{n}E^{Q\cap P}_{i}\,.
$$
These events represent the time paths over the time interval $t=1,\ldots,n$ leading to the outcomes
$$
(y_1,\ldots,y_n)\in\left\{
  \begin{array}{ll}
    \mathcal{T}^Q_{n}, & \hbox{SSRE terminates by select method Q;} \\
    \mathcal{T}^P_{n}, & \hbox{SSRE terminates by select method P;} \\
    \mathcal{C}^{Q\cap P}_{n}, & \hbox{SSRE continues.}
  \end{array}
\right.
$$
It follows that $\beta_Q=\Pr(\mathcal{T}^Q_{n}|\mathcal{H}_P)$ represents the probability of terminating SSRE at time point $n$ by selecting method Q when in fact the expected score is minimised by method P, and that $\beta_P=\Pr(\mathcal{T}^P_{n}|\mathcal{H}_Q)$ represents the probability of terminating SSRE at time point $n$ by selecting method P when in fact the expected score is minimised by method Q.
An obviously desirable property of any SSRE scheme is that $k_l$ and $k_u$ should be set so that the risks of making an incorrect decision on termination are suitably bounded.\footnote{The risk associated with a SSRE incorrect decision is analogous to a hypothesis test type II error, but $\mathcal{H}_Q$ and $\mathcal{H}_P$ do not play the conventional role of null and alternative hypotheses here.}

In order to determine values of $k_l$ and $k_u$ that will ensure that $\beta_Q$ and $\beta_P$ achieve preassigned levels we will use the following lemma. This result generalises a moment generating function (MGF) property due to Wald that he used to approximate the operating characteristic of the standard SPRT \citep[][\S\,A 2.3]{wald:1947}. In the following lemma, and in subsequent integrals with respect to $\mathsf{P}$, the integral is taken over $\mathcal{T}^Q_{n}\cup\mathcal{T}^P_{n}\cup\mathcal{C}^{Q\cap P}_{n}$.
\begin{lemma}\label{lem:mgf}
	Assume that $\mathbb{E}_{\mathsf{P}}[\log(C_n(Q,P))]\neq 0$ and that
	$$
	\mathbb{E}_{\mathsf{P}}(\exp[h\log\{C_n(Q,P)\}])=\int\exp[h\log\{C_n(Q,P)\}]d\mathsf{P}<\infty
	$$
	for all $h\in\mathbb{R}$. Suppose, in addition, there exists an $\epsilon>0$ such that $\Pr(C_n(Q,P)>1+\epsilon)>0$ and $\Pr(C_n(Q,P)<1-\epsilon)>0$. Then there exists a value of $h\neq 0$, $h=h_{\mathsf{P}}$ say, where $h_{\mathsf{P}}$ and $\mathbb{E}_{\PP}[\log(C_n(Q,P)]$ are of opposite sign and $h_{\mathsf{P}}$ is such that $\mathbb{E}_{\PP}[\exp(h_{\mathsf{P}}\log(C_n(Q,P)))]=1$.
\end{lemma}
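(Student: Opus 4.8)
The plan is to treat the lemma as an instance of Wald's classical moment-generating-function argument applied to the real random variable $X:=\log\{C_n(Q,P)\}$ under $\mathsf{P}$. Write $M(h):=\mathbb{E}_{\mathsf{P}}[\exp(hX)]=\mathbb{E}_{\mathsf{P}}(\exp[h\log\{C_n(Q,P)\}])$, which by hypothesis is finite for every $h\in\mathbb{R}$, and note that $M(0)=1$. First I would record the structural facts about $M$ that we will need. Since $M$ is finite on all of $\mathbb{R}$, a standard dominated-convergence argument permits differentiation under the integral sign, so $M$ is smooth with $M'(0)=\mathbb{E}_{\mathsf{P}}[X]=\mathbb{E}_{\mathsf{P}}[\log(C_n(Q,P))]\neq0$ by assumption, and $M''(h)=\mathbb{E}_{\mathsf{P}}[X^2\exp(hX)]$. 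The two tail conditions imply that $X$ is nondegenerate: $X>\log(1+\epsilon)>0$ with positive probability and $X<\log(1-\epsilon)<0$ with positive probability, so $M''(h)>0$ for all $h$ and $M$ is strictly convex.

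Next I would show that $M(h)\to\infty$ as $h\to+\infty$ and as $h\to-\infty$, using the one-sided tail bounds. For $h>0$, restricting the expectation defining $M(h)$ to the event $\{C_n(Q,P)>1+\epsilon\}$ gives $M(h)\geq(1+\epsilon)^h\,\Pr(C_n(Q,P)>1+\epsilon)\to\infty$; for $h<0$, restricting to $\{C_n(Q,P)<1-\epsilon\}$ gives $M(h)\geq\exp\{h\log(1-\epsilon)\}\,\Pr(C_n(Q,P)<1-\epsilon)\to\infty$, since $\log(1-\epsilon)<0$ forces $h\log(1-\epsilon)\to+\infty$ as $h\to-\infty$.

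With these ingredients the existence of $h_{\mathsf{P}}$ follows from the intermediate value theorem applied on the appropriate side of the origin. If $\mathbb{E}_{\mathsf{P}}[\log(C_n(Q,P))]=M'(0)>0$, then $M$ is strictly increasing at $0$, so $M(h)<1$ for $h$ slightly negative, while $M(h)\to\infty$ as $h\to-\infty$; continuity yields some $h_{\mathsf{P}}<0$ with $M(h_{\mathsf{P}})=1$, which is of opposite sign to $\mathbb{E}_{\mathsf{P}}[\log(C_n(Q,P))]$ as required. The symmetric argument, with $h$ slightly positive, handles the case $M'(0)<0$ and produces $h_{\mathsf{P}}>0$. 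Finally, strict convexity of $M$ implies that $M(h)=1$ has at most two solutions, and since $h=0$ is one of them the nonzero solution $h_{\mathsf{P}}$ is unique.

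The only points requiring genuine care are the justification of differentiation under the integral sign (so that $M'(0)=\mathbb{E}_{\mathsf{P}}[X]$ and $M$ is strictly convex) and the divergence of $M$ at $\pm\infty$ deduced from the merely one-sided tail conditions; these are precisely the steps in which the hypotheses $\mathbb{E}_{\mathsf{P}}(\exp[h\log\{C_n\}])<\infty$ for all $h$ and $\Pr(C_n>1+\epsilon)>0$, $\Pr(C_n<1-\epsilon)>0$ are used, and they constitute the main obstacle, though both are routine once it is known that $M$ is finite on the whole real line.
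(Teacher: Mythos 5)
Your proposal is correct and follows essentially the same route as the paper's proof (itself modelled on Wald's Lemma A.1): finiteness of the moment generating function $M(h)$ of $\log C_n(Q,P)$ on all of $\mathbb{R}$, strict convexity via $M''>0$, divergence of $M$ at $\pm\infty$ from the two one-sided tail conditions, and then $M(0)=1$ together with $M'(0)\neq0$ to locate a nonzero root of $M(h)=1$ on the side opposite in sign to $\mathbb{E}_{\mathsf{P}}[\log C_n(Q,P)]$. The only cosmetic difference is that you invoke the intermediate value theorem directly where the paper phrases the same step via Rolle's theorem and the location of the minimum of $M$.
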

\begin{proof}
Set $m(h)=\mathbb{E}_{\mathsf{P}}[\exp(hZ)]$ where $Z=\log(C_n(Q,P)$. For $h>0$ we have
$$
m(h)>\exp\{h\log(1+\epsilon)\}\Pr(Z>\log(1+\epsilon))=(1+\epsilon)^h\Pr(C_n(Q,P)>1+\epsilon)\,,
$$
and for $h<0$
$$
m(h)>\exp\{-h\log(1-\epsilon)\}\Pr(Z<\log(1-\epsilon))=(1-\epsilon)^{-h}\Pr(C_n(Q,P)<1-\epsilon)\,,
$$
from which it is evident that $m(h)\rightarrow\infty$ as $|h|\rightarrow\infty$. Denoting the first and second derivatives of $m(h)$ by $m'(h)$ and $m''(h)$ we have $m'(0)=\mathbb{E}_{\mathsf{P}}[Z]=\mu\neq 0$ and $m''(0)=\mathbb{E}_{\mathsf{P}}[Z^2\exp(hZ)]> 0$. It follows that $m(h)$ is a strictly convex function that increases to infinity as $|h|\rightarrow\infty$, and since $m(0)=1$ there exists a value $h_{\mathsf{P}}\neq 0$ such that $m(h_{\mathsf{P}})=1$. Rolle's theorem now implies that $m(h)$ has a unique minimum at a value $h_0$, $|h_0|<|h_{\mathsf{P}}|$, with $m(h_0)< 1$. If $\mu<0$ then $m(h)$ is decreasing at $h=0$, which implies that $h_{\mathsf{P}}$ is positive. Conversely, $m(h)$ is increasing at $h=0$ if $\mu>0$ and $h_{\mathsf{P}}$ must be negative.
\end{proof}
The proof of Lemma \ref{lem:mgf} is modelled on that of Wald \citep[Lemma A.1,][\S\,A 2.1]{wald:1947} and is presented here for the sake of completeness and to facilitate translation to SSRE schemes.
%
%

Now, consider the evaluation of $\Pr(\mathcal{T}^Q_{n}|\mathcal{H}_Q)$, and let $\mathds{Q}$ denote the measure with Radon-Nikodym derivative with respect to $\mathsf{P}$ given by
\begin{equation}\label{eq:rddiv}
\frac{d\mathds{Q}}{d\mathsf{P}}=\exp[h_{\mathsf{P}}\log\{C_n(Q,P)\}]=\exp\left[h_{\mathsf{P}}\left\{\sum_{i=1}^{n}D_i(Q,P)\right\}\right]\,.	
\end{equation}
By definition $\mathds{Q}$ defines a probability distribution on $(\times_{m=1}^n\mathsf{Y}_m,\mathscr{B}_n)$ that is absolutely continuous with respect to $\mathsf{P}$ \citep[][\S\, 31\,\&\,32]{halmos:1950}. From the definition of the SSRE scheme we have that method Q will be selected at time point $n$ if $C_n(Q,P)\leq k_l$. Hence we have that
$$
\mathbf{1}\{\mathcal{T}^Q_{n}\}=\left(\prod_{i=1}^{n-1}\mathbf{1}\{\mathcal{C}^{Q\cap P}_{i}\}\right)\mathbf{1}\{C_n(Q,P)\leq k_l\}\,,
$$
where $\mathbf{1}\{E\}$ denotes the indicator function for the event $E$, and from \eqref{usualnull} and Lemma \ref{lem:mgf} it follows that
\begin{align*}
\mathds{Q}(\mathcal{T}^Q_{n})=\int\mathbf{1}\{\mathcal{T}^Q_{n}\}d\mathds{Q}= & \int\mathbf{1}\{\mathcal{T}^Q_{n}\}\exp\{h_{\mathsf{P}}\log(C_n(Q,P))\}d\mathsf{P} \\
   \leq & \int k_l^{h_{\mathsf{P}}}\mathbf{1}\{\mathcal{T}^Q_{n}\}d\mathsf{P}=k_l^{h_{\mathsf{P}}}\Pr(\mathcal{T}^Q_{n}|\mathcal{H}_Q)\,,
\end{align*}
and $\mathds{Q}(\mathcal{T}^Q_{n})=k_l^{h_{\mathsf{P}}+\epsilon_Q}\Pr(\mathcal{T}^Q_{n}|\mathcal{H}_Q)$ where
$$
\epsilon_Q=\frac{\log\mathds{Q}(\mathcal{T}^Q_{n})-\log k_l^{h_{\mathsf{P}}}\Pr(\mathcal{T}^Q_{n}|\mathcal{H}_Q)}{\log k_l}\geq 0\,.
$$

A parallel argument shows that
$$
\mathds{Q}(\mathcal{T}^P_{n})\left\{
  \begin{array}{ll}
    \geq k_u^{h_{\mathsf{P}}}\Pr(\mathcal{T}^P_{n}|\mathcal{H}_P)&  \\
    =k_u^{h_{\mathsf{P}}+\epsilon_P}\Pr(\mathcal{T}^P_{n}|\mathcal{H}_P),&\epsilon_P=\frac{\log\mathds{Q}(\mathcal{T}^P_{n})-\log k_u^{h_{\mathsf{P}}}\Pr(\mathcal{T}^P_{n}|\mathcal{H}_P)}{\log k_u}\geq 0\,.
  \end{array}
\right.
$$

We have already seen that $\Pr(\mathcal{C}^{Q\cap P}_{n}|\mathcal{H}_a)=\int \mathbf{1}\{\mathcal{C}^{Q\cap P}_{n}\}d\mathsf{P}\rightarrow 0$ as $n\rightarrow\infty$, from which it follows that $\lim_{n\rightarrow\infty}\mathds{Q}(\mathcal{C}^{Q\cap P}_{n})=0$. Similarly, if termination occurs at time point $n$ then $\mathsf{P}(\mathcal{C}^{Q\cap P}_{n})=\mathsf{P}(\emptyset)=0$, so $\mathds{Q}(\mathcal{C}^{Q\cap P}_{n})=0$, and
\begin{equation}\label{probsum}
 \mathds{Q}(\mathcal{T}^Q_{n})+\mathds{Q}(\mathcal{T}^P_{n})=1=\Pr(\mathcal{T}^Q_{n}|\mathcal{H}_a)+\Pr(\mathcal{T}^P_{n}|\mathcal{H}_a)\,.
\end{equation}

By definition, $\beta_Q=\Pr(\mathcal{T}^Q_{n}|\mathcal{H}_P)$, and hence from \eqref{probsum} we have $1-\beta_Q=\Pr(\mathcal{T}^P_{n}|\mathcal{H}_P)$, with corresponding definitions for $\beta_P$ and $1-\beta_P$ obtained by interchanging the roles of Q and $P$. Applying the equality in \eqref{probsum} in conjunction with the relationship between $\Pr(\mathcal{T}^Q_{n}|\mathcal{H}_Q)$ and $\Pr(\mathcal{T}^P_{n}|\mathcal{H}_P)$, and $\mathds{Q}(\mathcal{T}^Q_{n})$ and $\mathds{Q}(\mathcal{T}^P_{n})$ on one hand, and $\beta_Q$ and $\beta_P$ on the other, we find, after some somewhat tedious algebra, that
\begin{equation}\label{errorrate}
\beta_Q=\frac{k_l^{h_{\mathsf{P}}+\epsilon_Q}(k_u^{h_{\mathsf{P}}+\epsilon_P}-1)}{k_u^{h_{\mathsf{P}}+\epsilon_P}-k_l^{h_{\mathsf{P}}+\epsilon_Q}}\quad\mbox{and}
\quad\beta_P=\frac{1-k_l^{h_{\mathsf{P}}+\epsilon_Q}}{k_u^{h_{\mathsf{P}}+\epsilon_P}-k_l^{h_{\mathsf{P}}+\epsilon_Q}} \,,
\end{equation}
from which the following results follow;
$$
\frac{\beta_Q}{1-\beta_P}=k_{lQ}^{h_{\mathsf{P}+\epsilon_Q}}<1<k_{uQ}^{h_{\mathsf{P}+\epsilon_P}}=\frac{1-\beta_Q}{\beta_P}\,,
$$
under $\mathcal{H}_Q$, and under $\mathcal{H}_P$
$$
\frac{\beta_P}{1-\beta_Q}=k_{lP}^{(h_{\mathsf{P}+\epsilon_P})}<1<k_{uP}^{(h_{\mathsf{P}+\epsilon_Q})}=\frac{1-\beta_P}{\beta_Q}\,.
$$
This leads us to the following boundary value result.
\begin{proposition}\label{boundary}
In order to achieve preassigned levels of risk $\beta_Q$ and $\beta_P$ on termination, the boundary constants $k_l$ and $k_u$ should be set equal to
$$
k_{l}=\left(\frac{\beta_Q}{1-\beta_P}\right)^{1/(h_{\mathsf{P}}+\epsilon_Q)}\quad\mbox{and}\quad k_{u}=\left(\frac{1-\beta_Q}{\beta_P}\right)^{1/(h_{\mathsf{P}}+\epsilon_P)}\,.
$$
\end{proposition}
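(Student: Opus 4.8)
The plan is to obtain both formulas as an algebraic corollary of the identities \eqref{errorrate} for the attained risks; the substantive work --- the change of measure \eqref{eq:rddiv}, the moment generating function argument of Lemma~\ref{lem:mgf}, and the bookkeeping that produces \eqref{errorrate} together with the two displayed relations immediately preceding the statement --- is already done, so all that remains is to invert a $2\times2$ system and extract roots. First I would set $a=k_l^{h_{\mathsf{P}}+\epsilon_Q}$ and $b=k_u^{h_{\mathsf{P}}+\epsilon_P}$, so that \eqref{errorrate} reads
\begin{equation*}
\beta_Q=\frac{a(b-1)}{b-a}\,,\qquad \beta_P=\frac{1-a}{b-a}\,.
\end{equation*}

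Next I would solve this system. From the second identity $b-a=(1-a)/\beta_P$, whence $b-1=(1-\beta_P)(1-a)/\beta_P$; substituting into the first identity and cancelling the common factor $(1-a)/\beta_P$ gives $\beta_Q=a(1-\beta_P)$, that is $a=\beta_Q/(1-\beta_P)$, and then $b=\bigl(1-a(1-\beta_P)\bigr)/\beta_P=(1-\beta_Q)/\beta_P$. The cancellation is legitimate because under $\mathcal{H}_Q$ and $\mathcal{H}_P$ the distribution of $C_n(Q,P)$ is nondegenerate, so $a\neq1$; this is the nondegeneracy hypothesis of Proposition~\ref{prop1} carried through to $C_n(Q,P)$. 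To recover $k_l$ and $k_u$ I would then take roots, noting that the exponents are positive in the relevant regime: under $\mathcal{H}_Q$, Jensen's inequality applied to the bound $\mathbb{E}_{\PP}[C_n(Q,P)\mid\mathscr{B}_n]\le1$ (cf.\ \eqref{usualnull}) gives $\mathbb{E}_{\PP}[\log C_n(Q,P)]\le0$, so Lemma~\ref{lem:mgf} forces $h_{\mathsf{P}}>0$, and since $\epsilon_Q,\epsilon_P\ge0$ the exponents $h_{\mathsf{P}}+\epsilon_Q$ and $h_{\mathsf{P}}+\epsilon_P$ are strictly positive, so $x\mapsto x^{1/(h_{\mathsf{P}}+\epsilon_Q)}$ and $x\mapsto x^{1/(h_{\mathsf{P}}+\epsilon_P)}$ are strictly increasing bijections of $(0,\infty)$ and
\begin{equation*}
k_l=a^{1/(h_{\mathsf{P}}+\epsilon_Q)}=\left(\frac{\beta_Q}{1-\beta_P}\right)^{1/(h_{\mathsf{P}}+\epsilon_Q)}\,,\qquad k_u=b^{1/(h_{\mathsf{P}}+\epsilon_P)}=\left(\frac{1-\beta_Q}{\beta_P}\right)^{1/(h_{\mathsf{P}}+\epsilon_P)}\,,
\end{equation*}
which is the assertion; equivalently, one simply reads these off the two displayed relations preceding the statement. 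I would finish by checking admissibility: $(a-1)(b-1)<0$ is equivalent to $\beta_Q+\beta_P<1$, so for any sensible risk pair $\beta_Q/(1-\beta_P)<1<(1-\beta_Q)/\beta_P$, and monotonicity of the root maps then yields $0<k_l<1<k_u$, consistent with the construction of the SSRE scheme.

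I do not expect a genuine obstacle in the argument itself, since the proposition is a corollary of \eqref{errorrate}; the only points needing a word are the cancellation of $(1-a)$ via nondegeneracy and the sign of the exponents via Lemma~\ref{lem:mgf}, both short. The conceptually important caveat --- which I would flag rather than resolve here --- is that the prescription is only implicit: $h_{\mathsf{P}}$ depends on the unknown true measure $\mathsf{P}$, and the overshoot corrections $\epsilon_Q$ and $\epsilon_P$ are themselves defined through the terminal probabilities $\mathds{Q}(\mathcal{T}^Q_n)$ and $\mathds{Q}(\mathcal{T}^P_n)$, hence depend on $k_l$ and $k_u$. Turning Proposition~\ref{boundary} into an operational recipe therefore calls for the Wald-type approximations ($\epsilon_Q,\epsilon_P\approx0$, together with a bound or estimate for $h_{\mathsf{P}}$) developed in the sequel; as a statement about the exact risks and exact boundaries, however, the displayed identity is all that is required.
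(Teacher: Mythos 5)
Your proposal is correct and follows essentially the same route as the paper: the proposition is read off by inverting the system \eqref{errorrate} to obtain $k_l^{h_{\mathsf{P}}+\epsilon_Q}=\beta_Q/(1-\beta_P)$ and $k_u^{h_{\mathsf{P}}+\epsilon_P}=(1-\beta_Q)/\beta_P$ (the two displayed relations immediately preceding the statement) and then extracting roots; you merely make explicit the ``somewhat tedious algebra'' the paper elides. Your closing caveat about the prescription being infeasible because $h_{\mathsf{P}}$, $\epsilon_Q$ and $\epsilon_P$ are unknown is exactly the remark the paper makes right after the proposition.
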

The boundary values obtained in Proposition \ref{boundary} correspond to the boundary inequalities of standard SPRT \citep[\textit{cf}.][Sections 2.3 and 2.3]{wetherill:1975}. Although they are relevant, the use of Proposition \ref{boundary} to implement SSRE is infeasible as it stands because neither $h_{\mathsf{P}}$ nor the perturbations $\epsilon_Q$ and $\epsilon_P$ will be known.
We will delay a consideration of how the parameters $k_l$ and $k_u$ can be determined in practice until after we have outlined the consequences of not terminating SSRE within a given time frame.
\subsection{Implementation and Truncation}\label{truncation}
Suppose that a forecaster has two forecast methods $\phi[Q_{\widehat{\theta}_n}(Y_{n+\tau}|\Omega_n)]$ and $\phi[P_{\widetilde{\vartheta}_n}(Y_{n+\tau}|\Omega_n)]$ that they contemplate using to forecast the functional $\phi$, and that they wish to select method Q or method P before making a prediction of the functional $\phi[Y_{T+\tau}]$ at time $T$. Assume that they collect a sample of $T$ values of $Y$, $y_1,\ldots,y_{T}$, and partition the sample into the first $R\ll T$ values of `training data' used to evaluate $\widehat{\theta}_0$ and $\widetilde{\vartheta}_0$, and out-of-sample `test data' observations $y_{R+n}$ for $n = 1,\ldots,T-R$ used to evaluate $C_n(Q,P)=\prod_{m=1}^nR_m(Q,P)$ and implement SSRE using boundary values $k_{l}$ and $k_{u}$. If SSRE terminates for $n\leq T-R$ then they will use the method selected to produce their forecast. If the event $\mathcal{C}^{Q\cap P}_{T-R}$ occurs, however, an adjustment to the decision process must be applied, assuming that further observations at time points $T+\tau$, $\tau>0$, are not available at time $T$.

A not unreasonable rule to follow is to divide the event $\mathcal{C}^{Q\cap P}_{n}$ for $n=T-R$ into the union of
$$
\overline{\mathcal{T}}^Q_{n}=\mathcal{C}^{Q\cap P}_{n}\bigcap \overline{E}^Q_n\quad\mbox{and}\quad\overline{\mathcal{T}}^P_{n}=\mathcal{C}^{Q\cap P}_{n}\bigcap \overline{E}^P_n
$$
where $\overline{E}^Q_n=\{k_{l}<C_n(Q,P)<1\}$ and $\overline{E}^P_n=\{1<C_n(Q,P)<k_{u}\}$, and select method Q if $\overline{\mathcal{T}}^Q_{n}$ occurs and select method P if $\overline{\mathcal{T}}^P_{n}$ occurs. This adjustment is equivalent to truncating SSRE at the last data point and replacing $\mathcal{C}^{Q\cap P}_{n}$ with the union of
$$
\{\bigcap_{i=1}^{n-1} E^{Q\cap P}_{i}\}\bigcap \overline{E}^Q_n\quad\mbox{and}\quad\{\bigcap_{i=1}^{n-1}E^{Q\cap P}_{i}\}\bigcap \overline{E}^P_n\,.
$$
Clearly, $\overline{\mathcal{T}}^Q_{n}\bigcap\overline{\mathcal{T}}^P_{n}=\emptyset$, and the event $\overline{\mathcal{T}}^Q_{n}\bigcup\overline{\mathcal{T}}^P_{n}$ only occurs if the SSRE process fails to terminate on or before $n=T-R$.

It is not surprising to find, via an argument that parallels the development preceeding Proposition \ref{boundary},
that the adjustment brought about by the necessity to invoke the truncation decision rule can result in not insignificant increases in error rates. Practitioners may therefore wish to avoid having to use the truncation rule. One way of doing this is to divide the sample of $T$ values of $Y$ in such a way that implementation of SSRE has a high chance of terminating for $n\leq T-R$. From the corollary to Proposition \ref{prop1} we know that the moments of $N$, the termination point of the SSRE, exist, and from Markov's inequality we have that
$$
\Pr\left(N< n\right)>1-\frac{\mathbb{E}_{\mathcal{P}}[N]}{n}\,.
$$
By assigning a value to $\mathbb{E}_{\mathcal{P}}[N]$ this bound can be used to determine the value of $n$ that will give the minimum sample size required to achieve a prescribed probability. Thereby a guide to $\Pr(N>n)$ can be established and a suitable choice for $R$, the length of the training data, made.

\section{Relationship to e-variables, and generalized e-variables}\label{ssree}
For SSRE $\mathcal{H}_Q$ and $\mathcal{H}_P$ do not play the role of null and alternative hypotheses as in conventional, i.e., Neyman-Person type, significance tests. Neither method Q nor method P takes precedence over the other, suggesting that the objective of SSRE is to balance the strength of evidence in favour of one method against the strength of evidence in favour of the other. An assignment in which $\Pr(\mathcal{T}^Q_{n}|\mathcal{H}_P)=\Pr(\mathcal{T}^P_{n}|\mathcal{H}_Q)$ is equivalent to employing what is termed in \citet{poskitt:sengarapillai:2010} a 'balanced test'. Our aim therefore is to determine boundary values $k_l$ and $k_u$ that yield error probabilities that reproduce desired error rates whilst equating method Q error rates with those of method $P$.
\subsection{SSRE and Generalized Universal e-values}\label{sec:gen_e}
To build our SSRE balanced testing approach, we begin by noting that Lemma \ref{lem:mgf} gives an interesting result on the behavior of the MGF of the statistic $\Delta_n(P,Q)=\sum_{m=1}^{n}D_m(Q,P)$.
Rewriting $\EP[C_n(Q,P)^{h}]$ as $\mathbb{E}_{\mathsf{P}}[\exp(h\log(C_n(Q,P)))]$, it follows by Lemma \ref{lem:mgf} that $h_{\mathsf{P}}$ can be represented as
$$
h_{\mathsf{P}}:=-\omega_{\mathsf{P}}\cdot\mathrm{sign}\left\{\EP[\Delta_n(Q,P)]\right\}\,,\quad\mbox{where}\quad\omega_{\mathsf{P}}\in[0,\infty)\,.
$$
The above representation suggests that the strength of evidence for, or against, $\mathcal{H}_Q$ can be gauged by the magnitude of
$$
\EP[\exp\left\{-\omega\cdot\mathrm{sign}\left\{\EP[\Delta_n(Q,P)]\right\}\Delta_n(Q,P)\right\}]\,,
$$
for some unknown values of $\omega\in[0,\infty)$. A formal statement that clarifies the nature of this relationship is given in the following lemma.
\begin{lemma}\label{lem:gooey}
Under the conditions of Lemma \ref{lem:mgf}, and for any $\omega\in[0,|h_{\mathsf{P}}|]$,
	$$
	\begin{cases}\EP[\exp\{\omega \Delta_n(Q,P)\}]&(\text{under }\mathcal{H}_{Q})\\
		\EP[\exp\{\omega \Delta_n(P,Q)\}]&(\text{under }\mathcal{H}_{P})	
	\end{cases}\bigg\}\le 1\,.
	$$
\end{lemma}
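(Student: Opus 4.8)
The plan is to read the entire statement off the moment generating function $m(h):=\EP[\exp\{h\log C_n(Q,P)\}]=\EP[\exp\{h\,\Delta_n(Q,P)\}]$ already dissected in the proof of Lemma~\ref{lem:mgf}. Three facts carry over from there under the stated conditions: $m$ is finite on all of $\mathbb{R}$ and strictly convex, $m(0)=1$, and there is a unique nonzero $h_{\PP}$ with $m(h_{\PP})=1$ whose sign is opposite to that of $\mu:=\EP[\Delta_n(Q,P)]=\EP[\log C_n(Q,P)]$. The whole proof then reduces to the elementary fact that a convex function is bounded above on an interval by the larger of its two endpoint values.

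Concretely, I would first fix the sign of $\mu$ in each case. Under $\mathcal{H}_Q$, Jensen's inequality applied exactly as in \eqref{usualnull} gives $1\ge\EP[C_n(Q,P)]\ge\exp(\mu)$, hence $\mu\le 0$; since the conditions of Lemma~\ref{lem:mgf} include $\mu\neq 0$, in fact $\mu<0$, so $h_{\PP}>0$ and $|h_{\PP}|=h_{\PP}$. Now for $\omega\in[0,|h_{\PP}|]=[0,h_{\PP}]$ write $\omega=(1-t)\cdot 0+t\,h_{\PP}$ with $t=\omega/h_{\PP}\in[0,1]$; convexity gives $\EP[\exp\{\omega\,\Delta_n(Q,P)\}]=m(\omega)\le(1-t)m(0)+t\,m(h_{\PP})=1$, which is the first claimed bound. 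Under $\mathcal{H}_P$ the roles of $Q$ and $P$ are interchanged throughout, so $\Delta_n(P,Q)=-\Delta_n(Q,P)$ plays the role of $\Delta_n(Q,P)$, one obtains $\EP[\Delta_n(P,Q)]<0$ and hence a positive root of the MGF of $C_n(P,Q)$, and the identical convexity estimate applied to $m(-\,\cdot\,)$ on the corresponding interval yields $\EP[\exp\{\omega\,\Delta_n(P,Q)\}]=m(-\omega)\le 1$ for $\omega\in[0,|h_{\PP}|]$.

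I expect the only genuinely delicate point to be the sign/applicability bookkeeping rather than any estimate: one must argue that Lemma~\ref{lem:mgf} truly applies with a root $h_{\PP}$ lying on the correct side of the origin, i.e.\ that $\mu\neq 0$ and that $\mathcal{H}_Q$ (resp.\ $\mathcal{H}_P$) pins $\mu$ negative (resp.\ positive). The hypothesis $\mathcal{H}_Q$ on its own only yields $\mu\le 0$ via Jensen, and it is the nondegeneracy of $C_n(Q,P)$, inherited from Lemma~\ref{lem:mgf}, that excludes the borderline case $\mu=0$ in which $h_{\PP}$ need not exist and the interval $[0,|h_{\PP}|]$ would be empty. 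Once that is secured, together with the matching of the orientation of $\Delta_n(Q,P)$ versus $\Delta_n(P,Q)$ to the sign of $h_{\PP}$ in the two hypotheses, nothing remains but the one-line interpolation inequality above.
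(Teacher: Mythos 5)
Your proposal is correct and follows essentially the same route as the paper: the sign of $h_{\mathsf{P}}$ is pinned down from Lemma \ref{lem:mgf}, and the bound $m(\omega)\le 1$ on $[0,|h_{\mathsf{P}}|]$ is read off from the strict convexity of the MGF together with $m(0)=m(h_{\mathsf{P}})=1$, with the $\mathcal{H}_P$ case handled by reflection. Your explicit convexity interpolation and your observation that the Lemma \ref{lem:mgf} hypothesis $\EP[\log C_n(Q,P)]\neq 0$ is what upgrades Jensen's $\mu\le 0$ to $\mu<0$ merely make explicit steps the paper leaves implicit.
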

\begin{proof}
Under $\mathcal{H}_Q$, $\EP[\Delta_n(Q,P)]<0$, for all $m$, so that by Lemma \ref{lem:mgf} we have that $h_{\mathsf{P}}=\omega_{\mathsf{P}}>0$. Furthermore, from the convexity properties of the MGF established as part of the proof of Lemma 1 we have that under $\mathcal{H}_Q$
$$
\EP[\exp\left\{-\omega \cdot \mathrm{sign}\left\{\EP[\Delta_n(Q,P)]\right\}\Delta_n(Q,P)\right\}]=\EP[\exp\{\omega\Delta_n(Q,P)\}]\le 1,
$$
for all $\omega=h\in[0,h_{\mathsf{P}}]$. Conversely, $h_{\mathsf{P}}=-\omega_{\mathsf{P}}<0$ under $\mathcal{H}_P$ and the same argument, in reverse, implies that $\EP[\exp\{\omega\Delta_n(P,Q)\}]\le 1$
for all $\omega=-h\in[0,|h_{\mathsf{P}}|]$ under $\mathcal{H}_P$.
\end{proof}

While the construction of the SSRE may seem obtuse, Lemma \ref{lem:gooey} indicates that if $h_{\mathsf{P}}$ were known, $C_n^{(\omega)}(Q,P):=\exp\{\omega\cdot \Delta_n(Q,P)\}$, would be an e-variable  for all $\omega\in[0,|h_{\mathsf{P}}|]$, and subsequently could be used to test $\mathcal{H}_{Q}/\mathcal{H}_{P}$. Given a probability space $(\mathsf{Y},\mathscr{B}_n, \mathcal{P})_{n=1}^{T}$, an \textit{e-variable}, $\mathcal{E}_n(\cdot):\mathsf{Y}\rightarrow[0,\infty]$, for a hypothesis $\mathcal{H}$ is an extended random variable that satisfies, for all $Q\in\mathcal{H}$, $\mathbb{E}_{Y\sim Q}[\mathcal{E}_n]\le 1$, $n\le T$. An \textit{e-process} $(M_n)_{n=1}^{T}$, where $T$ can be finite or infinite, is a non-negative stochastic process adapted to the  filtration $\mathscr{B}_n$ and such that for any stopping time $N$ and any $Q\in\mathcal{H}$, $\mathbb{E}_{Y\sim Q}[M_\ell]\le 1$; i.e., $M_t$ is an e-variable for $\mathcal{H}$ for all $t\le \ell$

Although it is possible to use $\{C_n^{(\omega)}(Q,P)\}_{n\ge1}$ in several ways, we believe the most direct way is to use the fact that ${C_n^{(\omega)}(Q,P) }$  can be interpreted as a universal generalized e-value (GUe-value), see \citet{dey2024anytime} and \citet{dey2024multiple}. In particular, the key realization of Lemma \ref{lem:gooey} is that it demonstrates an immediate connection between $C_n^{(\omega)}(Q,P)$ and e-values: Lemma \ref{lem:gooey} implies that, for any $\omega\in[0,|h_{\mathsf{P}}|]$, $C_n^{(\omega)}(Q,P)$ is an e-process under $\mathcal{H}_Q$; while $C_n^{(\omega)}(P,Q)$ is an e-process under $\mathcal{H}_P$. Thus, if $h_{\mathsf{P}}$ were known, we could build tests that deliver reliable control of error probabilities under $\mathcal{H}_Q$ and $\mathcal{H}_{P}$.

The critical property of e-values from the perspective of SSRE is that they can be used to control the overall risk of making an incorrect decision. Using the above definitions, and Ville's inequality, we can show that values of $C_n^{(\omega)}(Q,P)$, respectively $C_n^{(\omega)}(P,Q)$, that bound the size of tail probabilities can be straightforwardly determined.
\begin{lemma}\label{lem:size}
	For any $\pi\in(0,1)$ and $\omega\in[0,|h_{\mathsf{P}}|]$, $\Pr\left[\sup_{n=1,\ldots,T}C^{(\omega)}_n(Q,P)\geq 1/\pi\right]\le \pi$
under $\mathcal{H}_Q$, and under $\mathcal{H}_P$ $\Pr\left[\sup_{n=1,\ldots,T}C^{(\omega)}_n(P,Q)\geq 1/\pi\right]\le \pi$\,.
\end{lemma}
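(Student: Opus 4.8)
The plan is to apply Ville's inequality (the maximal inequality for nonnegative supermartingales) to the process $\{C_n^{(\omega)}(Q,P)\}_{n=1,\ldots,T}$. First I would invoke Lemma \ref{lem:gooey}, which establishes that for any fixed $\omega\in[0,|h_{\mathsf{P}}|]$ the process $C_n^{(\omega)}(Q,P)=\exp\{\omega\Delta_n(Q,P)\}$ is an e-process under $\mathcal{H}_Q$: it is nonnegative, adapted to $\mathscr{B}_n$, and satisfies $\EP[C_n^{(\omega)}(Q,P)]\le 1$ for every $n\le T$. More precisely, since $\Delta_n(Q,P)=\sum_{m=1}^n D_m(Q,P)$ has increments, the conditional version of the argument in Lemma \ref{lem:gooey} gives $\EP[C_{n+1}^{(\omega)}(Q,P)\mid\mathscr{B}_n]\le C_n^{(\omega)}(Q,P)$, so $C_n^{(\omega)}(Q,P)$ is in fact a nonnegative supermartingale under any $\mathcal{P}\in\mathcal{H}_Q$, started from a value no larger than $1$ at $n=1$ (equivalently one may adjoin $C_0^{(\omega)}=1$).

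Next I would apply Ville's inequality: for a nonnegative supermartingale $(M_n)$ with $\E[M_0]\le 1$ and any $a>0$,
$$
\Pr\left[\sup_{n\ge 0} M_n \ge a\right]\le \frac{\E[M_0]}{a}\le \frac{1}{a}.
$$
Taking $M_n=C_n^{(\omega)}(Q,P)$ and $a=1/\pi$ yields
$$
\Pr\left[\sup_{n=1,\ldots,T}C_n^{(\omega)}(Q,P)\ge 1/\pi\right]\le \pi
$$
under $\mathcal{H}_Q$, which is the first claim; since the supremum is over finitely many indices $n\le T$ the same bound follows a fortiori from the finite-horizon version (Doob's maximal inequality), so no subtlety about almost-sure limits arises. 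The statement under $\mathcal{H}_P$ is obtained by the identical argument applied to $C_n^{(\omega)}(P,Q)=\exp\{\omega\Delta_n(P,Q)\}$, which Lemma \ref{lem:gooey} shows is a nonnegative supermartingale under any $\mathcal{P}\in\mathcal{H}_P$ for $\omega\in[0,|h_{\mathsf{P}}|]$, again with initial value bounded by $1$.

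The only mild obstacle is checking that $C_n^{(\omega)}$ is genuinely a supermartingale and not merely that its unconditional expectation is bounded by $1$ at each fixed $n$: this requires the \emph{conditional} inequality $\EP[\exp\{\omega D_{n+1}(Q,P)\}\mid\mathscr{B}_n]\le 1$ under $\mathcal{H}_Q$, which follows because $\mathcal{H}_Q$ is imposed at every $n\ge 1$ (so $\EP[R_{n+1}(Q,P)\mid\mathscr{B}_{n+1}]\le 1$, hence $\EP[R_{n+1}\mid\mathscr{B}_n]\le 1$ after a further conditioning), combined with the convexity/MGF bound established in the proof of Lemma \ref{lem:mgf} that gives $\EP[\exp\{\omega Z\}\mid\mathscr{B}_n]\le 1$ for $\omega$ between $0$ and the crossing point $h_{\mathsf{P}}$ whenever the conditional mean of $Z=\log R_{n+1}$ is of the sign consistent with $\mathcal{H}_Q$. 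Given that this is exactly the content already extracted in Lemma \ref{lem:gooey}, the proof reduces to a one-line citation of that lemma followed by Ville's inequality, and I do not anticipate any further technical difficulty.
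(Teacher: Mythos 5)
Your proof matches the paper's: both establish that $C_n^{(\omega)}(Q,P)$ is a nonnegative supermartingale under $\mathcal{H}_Q$ (via the conditional e-variable property of the increments together with the tower property) and then conclude by Ville's inequality, with the $\mathcal{H}_P$ case following by symmetry. The paper's version argues directly from $\mathbb{E}_{\mathsf{P}}[R_m(Q,P)\mid\mathscr{B}_{m-1}]\le 1$ rather than routing through Lemma \ref{lem:gooey}, and in fact only writes out the case $\omega=1$, so your explicit attention to the conditional increment bound for general $\omega\in[0,|h_{\mathsf{P}}|]$ is, if anything, more careful than the published argument.
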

Lemma \ref{lem:size} suggests that SSRE schemes formulated by setting $k_l$ and $k_u$ equal to pre-conceived thresholds will deliver error rates that are suitably controlled regardless of the stopping rule.
Thus, setting $k_l=\{\beta/(1-\beta)\}^{1/\omega}$ and $k_u=\{(1-\beta)/\beta\}^{1/\omega}$ where $0<\beta<\half$ results in error rates $\beta_Q$ and $\beta_P$ that are both simultaneously less than or equal to $\beta/(1-\beta)$.
Such a choice for the boundary constants is likely to produce a conservative procedure, as the only guiding principle in this choice is the forecasters wish to control overall risk, and since the bounds in Lemma \ref{lem:size} are not sharp. Note that the e-value inequalities in Lemma \ref{lem:size} are here controlling what in a conventional Neyman-Person framework would be regarded as type II errors -- $k_l$ controls the probability of terminating by selecting method Q when $\mathcal{H}_P$ holds, and $k_u$ controls the probability of terminating by selecting method P when $\mathcal{H}_Q$ holds.

\subsection{Implementation and False Discovery}
Consider again the scenario introduced in Section \ref{truncation}, where a sample of $T+\tau$ values of $Y$, $y_1,\ldots,y_{T+\tau}$, is partitioned into the first $R\ll T$ `training data' values and out-of-sample `test data' observations $y_{R+n}$ for $n = 1,\ldots,N=T-R$ used to evaluate $C_n(Q,P)$ and implement SSRE using boundary values $k_{l}$ and $k_{u}$. In the event the SSRE process fails to terminate on or before $n=N$, a truncation decision rule was suggested that divides the event $\mathcal{C}^{Q\cap P}_{N}$ into the union of $\overline{E}^Q_N=\{k_{l}<C_N(Q,P)<1\}$ and $\overline{E}^P_N=\{1<C_N(Q,P)<k_{u}\}$, with method Q selected if $\overline{E}^Q_N$ occurs and method P selected if $\overline{E}^P_N$ occurs. Unfortunately, invoking the truncation decision rule can result in significant increases in error rates, and more importantly, it ignores information about the relative merits of method Q and method P contained in $\mathcal{C}^{Q\cap P}_{n}$ for $n=1,\ldots,N-1$.

The critical finding in Section \ref{sec:gen_e} is that due to the behavior of the function $C_n(Q,P)$ elucidated in Lemma \ref{lem:mgf}, by Lemma \ref{lem:gooey} we have that $\{C^{(\omega)}_n(Q,P)\}_{n\ge1}$ is an e-process for $\omega\in[0,|h_{\mathsf{P}}|]$, and thus can be used to control the size of tail probabilities as in Lemma \ref{lem:size}. 
However, even through $\{C^{(\omega)}_n(Q,P)\}_{n\ge1}$ is an e-process, if we were to test $\mathcal{H}_Q$ (or $\mathcal{H}_P$) many times, the resulting procedure would not control the \textit{false discovery rate} (FDR), defined as the expected proportion of falsely rejected hypotheses, i.e. the probability of selecting method P when $\mathcal{H}_Q$ holds (respectively selecting method Q when $\mathcal{H}_P$ holds) in a sequence of tests.

Nevertheless, Wang and Ramdas (2022) have construed a testing algorithm that controls the FDR and only requires that we have access to a collection of e-values $e_1,\dots, e_N$, corresponding to $N$ tests of the hypothesis $\mathcal{H}_Q$, $\mathcal{H}_Q^n$, $n=1,\ldots,N$, where, with an obvious abuse of notation, $\mathcal{H}_Q^n$, where $n\le N$ 
refers to the satisfaction of $\mathcal{H}_Q$ at the point $n\ge1$. The key idea of Wang and Ramdas (2022) is to create a new sequence of e-values through scaling changes:
\begin{itemize}
	\item Let $e_{(1)}\leq e_{(2)}\leq\cdots\leq e_{(N)}$ denote the ordered set of e-variables.
	\item Transform these ordered e-variables into new e-variables $e_{(1)}^\star,\dots,e^\star_{(N)}$ according to $e^\star_{(j)}=(j/N)e_{(j)}$, $j=1,\ldots,N$.
\end{itemize}
 \citet{wang:ramdas:2022} prove that the e-process $\{e_{(n)}^\star:1\leq n\le N\}$ controls the FDR under arbitrary dependence between the original e-values.

Hence, since we know that $C_n^{(\omega)}(Q,P)$ are e-values, for $\omega\in[0,|h_{\mathsf{P}}|]$, then so are the ordered and scaled e-variables
$$
e_{(j)}^\star(Q,P)=C_{(j)}^{(\omega)}(Q,P)(j/N),\quad 1\le j\le N,
$$
where each $C_{(j)}^{(\omega)}(Q,P)$ is based on $n_j\le n$ observations. Thus, if our goal is to assess the validity for a  collection of null hypothesis $\mathcal{H}_Q^1,\dots,\mathcal{H}_Q^N$, when $N$ is some stopping time, this can be achieved using any of the strategies proposed in Vovk and Wang (2021) for combining e-values. Therefore, we follow the suggestion of Dey, Martin and Williams (2024), and propose to use the average e-variable
$$
\overline{\mathcal{E}}^{(\omega)}_N:=\frac{1}{N} \sum_{j=1}^N\frac{j}{N} \cdot C_{(j)}^{(\omega)}(Q,P)
$$to establish the validity of the collection of hypotheses. This then delivers the first useful result, which shows that the error of falsely rejecting a true $\mathcal{H}_Q^n$ among the $\mathcal{H}_Q^1,\dots,\mathcal{H}_Q^N$ can be controlled.
\begin{theorem}\label{meanevaluesize}
	If all of $\mathcal{H}_Q^{(1)}, \ldots, \mathcal{H}_Q^{(N)}$ are true, then, for any $\pi\in(0,1)$, and $N\ge2$,
	$$
	\operatorname{Pr}_{\mathsf{P}}\left\{\overline{\mathcal{E}}^{(\omega)}_N \ge  \pi^{-1}\right\} \leq \pi
	$$
\end{theorem}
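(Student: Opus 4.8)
The goal is to show that $\overline{\mathcal{E}}^{(\omega)}_N$ is itself an e-variable for the (combined) hypothesis that all of $\mathcal{H}_Q^{(1)},\ldots,\mathcal{H}_Q^{(N)}$ hold, since then the bound $\Pr_{\mathsf{P}}\{\overline{\mathcal{E}}^{(\omega)}_N \ge \pi^{-1}\} \le \pi$ is an immediate consequence of Markov's inequality (equivalently, the $T=1$ case of Lemma \ref{lem:size}, or Ville's inequality applied trivially). So the entire content is the claim $\mathbb{E}_{\mathsf{P}}[\overline{\mathcal{E}}^{(\omega)}_N] \le 1$ under the combined null. I would structure the argument in three steps.

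\emph{Step 1: each raw term is an e-variable.} By Lemma \ref{lem:gooey}, under $\mathcal{H}_Q$ we have $\mathbb{E}_{\mathsf{P}}[\exp\{\omega \Delta_n(Q,P)\}] = \mathbb{E}_{\mathsf{P}}[C_n^{(\omega)}(Q,P)] \le 1$ for every $\omega \in [0,|h_{\mathsf{P}}|]$ and every $n$. Since $\mathcal{H}_Q^{(j)}$ for $j \le N$ asserts exactly the satisfaction of $\mathcal{H}_Q$ at time $n_j$, and the combined null assumes all of these hold, each $C_{(j)}^{(\omega)}(Q,P) = C_{n_j}^{(\omega)}(Q,P)$ satisfies $\mathbb{E}_{\mathsf{P}}[C_{(j)}^{(\omega)}(Q,P)] \le 1$. (Note the ordering step $e_{(1)} \le \cdots \le e_{(N)}$ is just a relabelling of the index set $\{1,\ldots,N\}$, so the collection $\{C_{(j)}^{(\omega)}(Q,P)\}_{j=1}^N$ is the same multiset of e-variables, merely sorted.)

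\emph{Step 2: the averaging/scaling preserves the e-variable property.} Write $\overline{\mathcal{E}}^{(\omega)}_N = \frac{1}{N}\sum_{j=1}^N \frac{j}{N} C_{(j)}^{(\omega)}(Q,P)$. Taking expectations and using linearity, $\mathbb{E}_{\mathsf{P}}[\overline{\mathcal{E}}^{(\omega)}_N] = \frac{1}{N}\sum_{j=1}^N \frac{j}{N}\,\mathbb{E}_{\mathsf{P}}[C_{(j)}^{(\omega)}(Q,P)] \le \frac{1}{N}\sum_{j=1}^N \frac{j}{N} = \frac{1}{N}\cdot\frac{N+1}{2} = \frac{N+1}{2N} \le 1$ for $N \ge 2$ (here I use $\sum_{j=1}^N j/N = (N+1)/2$, and $(N+1)/(2N) \le 1 \iff N \ge 1$, with $N\ge 2$ giving strict slack). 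This is where the hypothesis $N \ge 2$ is used, although in fact $N\ge1$ already suffices; the extra margin reflects that no attempt is made to sharpen. The more careful point here is that we may take expectations termwise across the \emph{random} permutation implicit in the ordering: since $\{C_{(j)}^{(\omega)}\}$ is a deterministic reordering of $\{C_{n_j}^{(\omega)}\}$ and the weight $j/N$ attached to the $j$-th order statistic is bounded by $1$, we have the pointwise bound $\overline{\mathcal{E}}^{(\omega)}_N \le \frac{1}{N}\sum_{j=1}^N C_{n_j}^{(\omega)}(Q,P)$ before any expectation is taken, so $\mathbb{E}_{\mathsf{P}}[\overline{\mathcal{E}}^{(\omega)}_N] \le \frac{1}{N}\sum_{j=1}^N \mathbb{E}_{\mathsf{P}}[C_{n_j}^{(\omega)}(Q,P)] \le 1$, which sidesteps any subtlety about the order statistics being dependent on the data.

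\emph{Step 3: conclude via Markov.} With $\mathbb{E}_{\mathsf{P}}[\overline{\mathcal{E}}^{(\omega)}_N] \le 1$ and $\overline{\mathcal{E}}^{(\omega)}_N \ge 0$, Markov's inequality gives $\Pr_{\mathsf{P}}\{\overline{\mathcal{E}}^{(\omega)}_N \ge \pi^{-1}\} \le \pi\,\mathbb{E}_{\mathsf{P}}[\overline{\mathcal{E}}^{(\omega)}_N] \le \pi$. The main obstacle, such as it is, is purely bookkeeping: making precise that the order-statistic weighting $j/N$ does not break the expectation bound despite the weights being assigned to data-dependent reorderings — and the clean resolution is the pointwise domination in Step 2, which renders the argument essentially one line of linearity plus Markov. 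There is nothing deep here beyond correctly invoking Lemma \ref{lem:gooey}; the work was already done in establishing that $C_n^{(\omega)}(Q,P)$ is an e-process.
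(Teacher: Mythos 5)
Your proposal is correct and follows essentially the same route as the paper's proof: bound $\mathbb{E}_{\mathsf{P}}[\overline{\mathcal{E}}^{(\omega)}_N]$ via Lemma \ref{lem:gooey} and then apply Markov's inequality. One point in your favour: your pointwise domination $\overline{\mathcal{E}}^{(\omega)}_N \le \frac{1}{N}\sum_{j=1}^N C_{n_j}^{(\omega)}(Q,P)$ is actually more careful than the paper's termwise step, since the paper invokes $\mathbb{E}_{\mathsf{P}}[C_{(j)}^{(\omega)}(Q,P)]\le 1$ for the \emph{ordered} (data-dependent) statistics, which Lemma \ref{lem:gooey} does not literally deliver (e.g.\ the maximum of $N$ e-variables need not have expectation at most one), whereas your argument bypasses this and still yields the stated bound $\le\pi$.
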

\begin{proof}
	From Markov's inequality,
\begin{flalign*}
	\operatorname{Pr}_{\mathsf{P}}\left\{\overline{\mathcal{E}}^{(\omega)}_N \ge  \pi^{-1}\right\} \leq \EP[\overline{\mathcal{E}}^{(\omega)}_N] \pi=\pi\frac{1}{N}\sum_{j=1}^{N}\frac{j}{N}\EP \left[C_{(j)}^{(\omega)}(Q,P)\right]&={\pi}\frac{N(N+1)}{2N^2}\\&\le \pi,
\end{flalign*}where the second equality comes from the fact that, for each $\omega\in[0,|h_{\mathsf{P}}|]$, by Lemma \ref{lem:gooey} $\EP \left[C_{(j)}^{(\omega)}(Q,P)\right]\le 1$, and the third from the fact that $2N^2> N(N+1)$ for all $N\ge2$.
\end{proof}

Alternatively, if at least one of the collection of hypotheses is invalid, we can show that $\overline{\mathcal{E}}^{(\omega)}_N $ will be larger than $1/\pi$ with probability converging to one. To state such a result, we first require the following intermediate result, the proof of which is given in the appendix.

\begin{lemma}\label{lem:intermediate}
Suppose that $|n^{-1}\Delta_n(Q,P)-\EP [n^{-1}\Delta_n(Q,P)]|=o_p(1)$. If $\lim_n\EP [n^{-1}\Delta_n(Q,P)]=\vartheta>0$, then as $n\rightarrow\infty$
$$
\Pr\{C_{n}^{(\omega)}(Q,P)\ge \pi^{-1}\}\ge 1-o(1),\forall\pi\in(0,1),\;\omega\in[0,|h_{\mathsf{P}}|].
$$
\end{lemma}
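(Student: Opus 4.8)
The plan is to show that $C_n^{(\omega)}(Q,P) = \exp\{\omega \Delta_n(Q,P)\}$ diverges to $+\infty$ in probability as $n\to\infty$ whenever $\omega>0$, because the exponent $\omega\Delta_n(Q,P)$ grows linearly. Writing $\Delta_n(Q,P) = n\cdot(n^{-1}\Delta_n(Q,P))$, the hypothesis $|n^{-1}\Delta_n(Q,P) - \EP[n^{-1}\Delta_n(Q,P)]| = o_p(1)$ together with $\lim_n \EP[n^{-1}\Delta_n(Q,P)] = \vartheta > 0$ gives $n^{-1}\Delta_n(Q,P) \xrightarrow{p} \vartheta$. Hence for any $\delta \in (0,\vartheta)$ we have $\Pr(n^{-1}\Delta_n(Q,P) > \vartheta - \delta) \to 1$, i.e. $\Pr(\Delta_n(Q,P) > n(\vartheta-\delta)) \to 1$. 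The case $\omega = 0$ is trivial since then $C_n^{(\omega)}(Q,P) \equiv 1 \ge \pi^{-1}$ fails only when $\pi < 1$, but actually $\omega=0$ gives $C_n^{(\omega)}\equiv 1 < \pi^{-1}$ for $\pi\in(0,1)$, so one should either note that the statement is vacuous/trivial at $\omega=0$ or restrict to $\omega\in(0,|h_{\mathsf{P}}|]$; I would simply remark that for $\omega=0$ the claim reduces to $1\ge\pi^{-1}$ which holds only as $\pi\to 1$, and handle the substantive case $\omega>0$.

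Concretely, I would proceed as follows. First, fix $\pi \in (0,1)$ and $\omega \in (0, |h_{\mathsf{P}}|]$, and set $\varepsilon = \pi^{-1} > 1$. Then
$$
\Pr\{C_n^{(\omega)}(Q,P) \ge \varepsilon\} = \Pr\{\omega \Delta_n(Q,P) \ge \log \varepsilon\} = \Pr\Big\{n^{-1}\Delta_n(Q,P) \ge \frac{\log \varepsilon}{\omega n}\Big\}.
$$
Second, choose $\delta = \vartheta/2 > 0$. By the two stated hypotheses, there is $n_0$ such that for $n \ge n_0$ we have both $\EP[n^{-1}\Delta_n(Q,P)] > \vartheta - \delta/2 = 3\vartheta/4$ and $(\log\varepsilon)/(\omega n) < \vartheta/4$; hence the event $\{n^{-1}\Delta_n(Q,P) \ge (\log\varepsilon)/(\omega n)\}$ contains the event $\{n^{-1}\Delta_n(Q,P) \ge \vartheta/4\}$, which in turn contains $\{|n^{-1}\Delta_n(Q,P) - \EP[n^{-1}\Delta_n(Q,P)]| < \vartheta/2\}$. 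Third, apply the $o_p(1)$ hypothesis: $\Pr\{|n^{-1}\Delta_n(Q,P) - \EP[n^{-1}\Delta_n(Q,P)]| < \vartheta/2\} \to 1$. Combining, $\Pr\{C_n^{(\omega)}(Q,P) \ge \pi^{-1}\} \ge 1 - o(1)$ as $n \to \infty$, which is the claim.

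I do not expect a serious obstacle here — the result is essentially a law-of-large-numbers argument dressed up through the exponential map, and the $o_p(1)$ assumption does all the heavy lifting. The only mild subtlety is bookkeeping around the threshold $(\log\pi^{-1})/(\omega n)$, which shrinks to zero while the mean stays bounded below by a positive constant; one must just be slightly careful to pick $n$ large enough that this threshold falls below, say, $\vartheta/4$, so that convergence in probability of $n^{-1}\Delta_n(Q,P)$ to $\vartheta$ suffices. A secondary point worth a sentence is the degenerate endpoint $\omega = 0$, where $C_n^{(0)}(Q,P) = 1$ and the inequality $C_n^{(0)}(Q,P) \ge \pi^{-1}$ cannot hold for $\pi \in (0,1)$; I would note that the conclusion is therefore understood for $\omega \in (0,|h_{\mathsf{P}}|]$ (or that the case $\omega=0$ is vacuous), consistent with the role this lemma plays in the companion power result for $\overline{\mathcal{E}}_N^{(\omega)}$.
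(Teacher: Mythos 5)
Your proof is correct and rests on the same idea as the paper's: a law-of-large-numbers argument pushed through the exponential map, exploiting that the threshold $\log(\pi^{-1})/(\omega n)$ shrinks to zero while $\EP[n^{-1}\Delta_n(Q,P)]$ stays bounded away from zero. Where you differ is in how the concentration step is executed, and there your version is actually the more careful one. The paper bounds $\Pr\{C_n^{(\omega)}(Q,P)<\pi^{-1}\}$ by a union bound whose first term is $\Pr\{n^{-1}(\Delta_n(Q,P)-\EP\Delta_n(Q,P))<0\}$, and then asserts that this term is $o(1)$ as a consequence of $|n^{-1}\Delta_n(Q,P)-\EP[n^{-1}\Delta_n(Q,P)]|=o_p(1)$. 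That assertion does not follow from the hypothesis: convergence in probability to zero of the centred average is perfectly compatible with $\Pr\{n^{-1}\Delta_n(Q,P)<\EP[n^{-1}\Delta_n(Q,P)]\}=1/2$ for every $n$ (e.g.\ Gaussian fluctuations of order $n^{-1/2}$ about the mean). Your argument avoids this entirely by keeping a fixed positive margin: you show that for $n$ large enough that the threshold falls below $\vartheta/4$ and the mean exceeds $3\vartheta/4$, the target event contains $\{|n^{-1}\Delta_n(Q,P)-\EP[n^{-1}\Delta_n(Q,P)]|<\vartheta/2\}$, whose probability is $1-o(1)$ directly from the hypothesis. This is the right way to do it and in effect repairs the paper's ``Term 1'' step. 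Your side remark about the endpoint $\omega=0$ is also well taken: $C_n^{(0)}(Q,P)\equiv 1<\pi^{-1}$ for $\pi\in(0,1)$, so the conclusion is only meaningful for $\omega\in(0,|h_{\mathsf{P}}|]$; the paper's own proof tacitly assumes this as well (its ``Term 2'' vanishes only for $\omega>0$), and the lemma is invoked in Theorem \ref{meanevaluepower} only with $\omega$ in the open interval.
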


To obtain our next result, recall that each $C_{(j)}^{(\omega)}(Q,P)$ are calculated using $n_j$ observations. In what follows, we then assume that, for $n_0\le n_j$ for each $n_j$, $j\le N$, and that $n_0\rightarrow\infty $ as $T\rightarrow\infty$.

\begin{theorem}\label{meanevaluepower}
Suppose that $|n_0^{-1}\Delta_{n_0}(Q,P)-\EP [n_0^{-1}\Delta_{n_0}(Q,P)]|=o_p(1)$ as $T\rightarrow\infty$, and that $\mathcal{H}_Q^{j}$ is false for some $j\le N$. Then for each $\omega\in(0,|h_{\mathsf{P}}|)$,  $\overline{\mathcal{E}}^{(\omega)}_N$  satisfies
$$
\Pr\left\{\overline{\mathcal{E}}^{(\omega)}_N \geq1/\pi\right\}\ge 1-o(1).
$$
\end{theorem}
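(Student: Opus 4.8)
The plan is to turn the assumed inferiority of method Q into exponential growth of the e‑variables that make up $\overline{\mathcal{E}}^{(\omega)}_N$, and then to observe that the ordering‑and‑scaling weights $j/N$ cost only an absolute constant. The hypothesis ``$\mathcal{H}_Q^{j}$ false for some $j\le N$'' places us in the regime of Lemma \ref{lem:intermediate}: the relevant configuration is one in which $\lim_n\EP[n^{-1}\Delta_n(Q,P)]=\vartheta>0$. Fix $n^{\star}$ so large that $\EP[n^{-1}\Delta_n(Q,P)]\ge\vartheta/2$ for every $n\ge n^{\star}$; since $n_0\to\infty$ with $T$, for all $T$ large every window obeys $n_j\ge n_0\ge n^{\star}$, hence $\EP[n_j^{-1}\Delta_{n_j}(Q,P)]\ge\vartheta/2$ for each $j\le N$.

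First I would run a counting argument. Fix $j\le N$. Since $\omega>0$ and $n_0\le n_j$, the event $\{C^{(\omega)}_{n_j}(Q,P)<e^{\omega\vartheta n_0/4}\}$ coincides with $\{\Delta_{n_j}(Q,P)<\vartheta n_0/4\}$ and is therefore contained in $\{n_j^{-1}\Delta_{n_j}(Q,P)<\vartheta/4\}$, which — because $\EP[n_j^{-1}\Delta_{n_j}(Q,P)]\ge\vartheta/2$ — is in turn contained in $\{\,|n_j^{-1}\Delta_{n_j}(Q,P)-\EP[n_j^{-1}\Delta_{n_j}(Q,P)]|>\vartheta/4\,\}$, an event whose probability is some $\varepsilon_T\to0$ by the hypothesised $o_p(1)$ bound (taken uniformly over the windows in play, the smallest being the binding case). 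Letting $Z$ count the indices $j\le N$ with $C^{(\omega)}_{n_j}(Q,P)<e^{\omega\vartheta n_0/4}$, Markov's inequality gives $\Pr\{Z\ge N/2\}\le 2\EP[Z]/N\le 2\varepsilon_T\to0$. On the complementary event at least $\lceil N/2\rceil$ of the e‑variables exceed $e^{\omega\vartheta n_0/4}$, so the ordered values satisfy $C^{(\omega)}_{(k)}(Q,P)\ge e^{\omega\vartheta n_0/4}$ for every $k>N/2$, whence
\[
\overline{\mathcal{E}}^{(\omega)}_N=\frac{1}{N^2}\sum_{k=1}^{N}k\,C^{(\omega)}_{(k)}(Q,P)\ \ge\ \frac{e^{\omega\vartheta n_0/4}}{N^2}\sum_{\lceil N/2\rceil<k\le N}k\ \ge\ c\,e^{\omega\vartheta n_0/4}
\]
for an absolute constant $c>0$ and all $N\ge2$. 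Since $\omega>0$ and $n_0\to\infty$, the right‑hand side diverges and hence exceeds $\pi^{-1}$ for all $T$ large; combining, $\Pr\{\overline{\mathcal{E}}^{(\omega)}_N\ge\pi^{-1}\}\ge 1-2\varepsilon_T-o(1)=1-o(1)$.

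The steps I expect to require genuine care are the two that feed the counting argument: (i) justifying the passage from ``$\mathcal{H}_Q^{j}$ false for some $j$'' to the uniform drift statement $\EP[n^{-1}\Delta_n(Q,P)]\to\vartheta>0$, and thence to $\EP[\Delta_{n_j}(Q,P)]\ge\vartheta n_0/2$ for every $j$ once $T$ is large; and (ii) verifying that the stipulated $o_p(1)$ concentration may be invoked uniformly across the (possibly distinct) windows $n_j$, so that a single $\varepsilon_T\to0$ controls $\EP[Z]$. Everything downstream — the deterministic lower bound for $\overline{\mathcal{E}}^{(\omega)}_N$ in terms of its largest $\lceil N/2\rceil$ ordered terms, and the exponential blow‑up guaranteed by Lemma \ref{lem:intermediate} — is routine. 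I note in passing that if one is willing to assume $n_0$ diverges faster than $\log N=\log(T-R)$, then already the cruder chain $\overline{\mathcal{E}}^{(\omega)}_N\ge N^{-1}C^{(\omega)}_{(N)}(Q,P)\ge N^{-1}C^{(\omega)}_{n_{j^\star}}(Q,P)$, applied to a single false index $j^\star$ together with Lemma \ref{lem:intermediate}, suffices and the counting argument can be dispensed with.
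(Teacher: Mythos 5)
Your proof is correct, but your main argument takes a genuinely different route from the paper's. The paper simply discards all but one term of the average: it bounds $\overline{\mathcal{E}}^{(\omega)}_N \ge (j/N^2)\,C^{(\omega)}_{n_j}(Q,P)$ for the false index $j$ and then invokes Lemma \ref{lem:intermediate} at the inflated threshold $\left(j\pi/N^2\right)^{-1}$, noting only that $j\pi/N^2\in(0,1)$. That is essentially the ``cruder chain'' you mention in your closing remark, and your observation there is apt: if $N$ grows with $T$, the paper's route silently needs the inflated threshold to be absorbed by the exponential growth of $C^{(\omega)}_{n_j}$, i.e.\ $\log N = o(n_0)$, a condition the paper never states (it also quietly identifies the ordered statistic $C^{(\omega)}_{(j)}$ with the unordered $C^{(\omega)}_{n_j}$, which your use of the maximal order statistic avoids). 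Your counting/majority argument buys freedom from that threshold inflation --- the lower bound $c\,e^{\omega\vartheta n_0/4}$ beats the fixed level $\pi^{-1}$ outright --- but pays for it by requiring the concentration hypothesis to hold uniformly across (at least half of) the windows $n_j$, rather than only at the single false index; you correctly flag this as the delicate step, since the theorem's stated hypothesis concerns only the smallest window $n_0$. Both your argument and the paper's share the same interpretive leap from ``$\mathcal{H}_Q^{j}$ is false for some $j$'' to the drift condition $\lim_n\EP[n^{-1}\Delta_n(Q,P)]=\vartheta>0$ required by Lemma \ref{lem:intermediate}, so that is not a defect of your write-up relative to the source. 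In short: your proof stands, is somewhat more robust in one direction and somewhat more demanding in another, and your final remark recovers the paper's one-line argument together with the hidden condition it needs.
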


\begin{proof}
	For any $j\le N$,
	$$
\overline{\mathcal{E}}^{(\omega)}_N\geq \frac{j}{N^2} C_{(j)}^{(\omega)}(Q,P)=\frac{j}{N^2}C_{n_j}^{(\omega)}(Q,P).
	$$ Consequently,
	$$
	\Pr\left\{\overline{\mathcal{E}}^{(\omega)}_N\ge1/\pi\right\} \geq \Pr\left(\frac{j}{N^2} C_{n_j}^{(\omega)}(Q,P) \geq1/\pi\right)=\Pr\left\{C_{n_j}^{(\omega)}(Q,P) \geq\left(\frac{j \pi}{N^2}\right)^{-1}\right\}
	$$
	Note that $j/N^2\in(0,1)$ by construction. Hence, $j\pi/N^2\in(0,1)$. Thus, if $\mathcal{H}_Q^j$ is false, by Lemma \ref{lem:intermediate}, we have 	
	$$
	\Pr\left\{\overline{\mathcal{E}}^{(\omega)}_N\ge1/\pi\right\} \geq \Pr\left\{G_{n_j}^{(\omega)}(Q,P) \geq\left(\frac{j \pi}{N^2}\right)^{-1}\right\}\ge 1-o(1).
	$$
\end{proof}

{Theorems \ref{meanevaluesize} and \ref{meanevaluepower} indicate that SSRE schemes can be supplemented by monitoring the average e-variables $\overline{\mathcal{E}}^{(\omega)}_N$ along with $C_n(Q,P)$, for $n=1,\ldots,N$, using the same $k_l$ and $k_u$ critical values. Information contained in $\mathcal{C}^{Q\cap P}_{n}$, $n=1,\ldots,N$, concerning the relative merits of method Q and method P will thereby be exploited and incorporated into the sequential decision making process. By setting $k_l$ and $k_u$ equal to values that will deliver preassigned terminal error rates, as in Section \ref{sec:gen_e}, suitable control of the FDR will also be achieved whilst maintaining the desired error rates, regardless of the stopping rule.}

\section{Empirical Implementation}\label{sec:implement}

\subsection{Selection of $\omega$}
Implementation of the SSRE procedure based on GUe-variables requires selecting a value of $\omega$, with the validity of our theoretical results requiring that this choice of $\omega$ be smaller than the unknown $|h_{\mathsf{P}}|$.
While such { a constraint} might appear to be a hindrance to implementation, this issue is not as difficult as it first seems. In particular, we know from the proof of Lemma \ref{lem:mgf} that under $\mathcal{H}_Q$, the function $m(\omega)=\E \exp\{-\omega\cdot \text{sign}\left\{\EP\Delta_n(Q,P)\right\}\Delta_n(Q,P)\}$ takes a value of unity at $\omega=0$ and { $\omega_{\mathsf{P}}>0$}. Further, $m(\omega)$ is less than unity for all $\omega\in[0,\omega_{\mathsf{P}}]$. Hence, by Role's theorem we know that $m(\omega)<1$ for all $\omega \in(0,\omega_{\mathsf{P}})$.

Thus, a possible strategy {  is not to select a specific value of $\omega$, but to choose several $\omega\in(0,1]$} and compare the behavior of the statistics
$$
\exp\{-\omega \cdot\text{sign}[\Delta_R(Q,P)]\Delta_R(Q,P)\}
$$across these different choices, where $R$ represents the length of the in-sample period. If all the statistics behave similarly, this is meaningful evidence that we have not inadvertently chosen $\omega> \omega_{\mathsf{P}}$. The behavior of the statistics can be plotted visually across the in-sample set and the resulting figures compared to determine that their behavior is similar.\\

{
Assuming that the values of $\omega$ chosen deliver statistics with similar behavior, each specific value of $\omega$ chosen can be used to construct a different $\overline{\mathcal{E}}_N^{(\omega)}$ and the average version of these can be used in place of a single test statistic $\overline{\mathcal{E}}_N^{(\omega)}$: if $\overline{\mathcal{E}}_N^{(\omega_1)}$ and $\overline{\mathcal{E}}_N^{(\omega_2)}$ are e-processes, then so is their average. Thus, given $S$ values $\omega_1,\ldots,\omega_S$ associated with $S$ similar statistics, and a window across which to implement the SSRE scheme, including $N\le T-R$ distinct periods, the SSRE scheme can be based on a sequence of statistics
$$
\left\{\overline{\mathcal{E}}_{N-j}^{\mathrm{Avg}}:=\frac{1}{S}\sum_{s=1}^S\overline{\mathcal{E}}_{N-j}^{(\omega_s)}\;:\;j=K+T-R,\dots, N+K+T-R \right\},
$$
where $K\ge0$ is chosen by the researcher. For example, in many cases, the researcher may only wish to test differences in forecast accuracy over the final $100$ pseudo out-of-sample observations. In such cases, we take $K$ such that $N=100=T-R-K$, i.e., $K=T-R-100$. A test can then be implemented by comparing each subsequent value of $\overline{\mathcal{E}}_{N-j}^{\text{Avg}}$ with an assigned boundary value, and rejecting the assumed null hypothesis once the boundary is breached. Hence, in what follows we use the average e-variable across three different choices of $\omega$ in our experiments: $\omega\in\{1/4,1/2,1\}$.
}
%
%

The error rates of such a SSRE testing procedure can be controlled by setting { the boundary values} $k_l$ and $k_u$ appropriately. To understand how this can be accomplished, let us simplify the setting by assuming we wish to establish whether a particular model, say model Q, is preferable; of course, nothing prohibits us from entertaining either model as being preferable, but for the purpose of selecting $k_l$ and $k_u$, it is simpler to consider evidence for or against one specific model rather than having to consider values of $k_l$ and $k_u$ that are appropriate for both models. {Such an approach implicitly favors model Q as a benchmark and then seeks to find evidence against the benchmark, i.e. evidence in favour of model P.}

Viewing model Q as our benchmark, we can ask the question of how do our theoretical results shed light on what an appropriate value of $k_l$ and $k_u$ might be. Since the suggested testing approach is based on e-variables, we know that if $\mathcal{H}_Q$ is more accurate then, on average, $C_n(Q,P)$ will be less than unity. We can therefore restructure the SSRE scheme by considering a value of $k_l=1$ as suggesting that the benchmark model Q is favored over model $P$, and then { selecting} a value of $k_u>1$ to ensure we can control the error of favouring model P when in fact benchmark model Q is preferable. This leads us to consider a slightly modified version of the SSRE scheme based on the preference and indifference regions illustrated in Figure \ref{piregions}.
\begin{figure}[!h]
	\centering
	\begin{tikzpicture}
		\centering
		\tikzset{
			position label/.style={
				below = 3pt,
				text height = 1.5ex,
				text depth = 1ex
			},
			brace/.style={
				decoration={brace, mirror},
				decorate
			}
		}
		\draw (0,0) -- (10,0);
		\foreach \x in {0,2.3,6,10}
		\draw (\x cm,3pt) -- (\x cm,-3pt);
		\node [position label] (cStart) at (0,0) {$ 0 $};
		\node [position label] (cA) at (2.3,0) {$ k_l=1$};
		\node [position label] (cD) at (6,0) {$ k_u$};
		\node [position label] (cEnd) at (10,0) {$\infty$};
		
		\draw [brace] (cStart.south) -- node [position label, pos=0.5] {$C_n(Q,P)$ favours $\mathcal{H}_Q$} (cA.south);
		\draw [brace,decoration={raise=4ex}](cD.south) -- node [position label,yshift=-4ex] {$C_n(Q,P)$ favours $\mathcal{H}_P$} (cEnd.south);
		\draw [decorate,decoration={brace,mirror,raise=-8ex}](cA.south) -- node [position label,yshift=12ex] {SSRE indifference region} (cD.south);
		
	\end{tikzpicture}
	\caption{Regions of favorability for $\mathcal{H}_Q$ and $\mathcal{H}_P$ in terms of the modified SSRE boundary constants $k_l$ and $k_u$, with $k_l=1$.}
\label{piregions}
\end{figure}
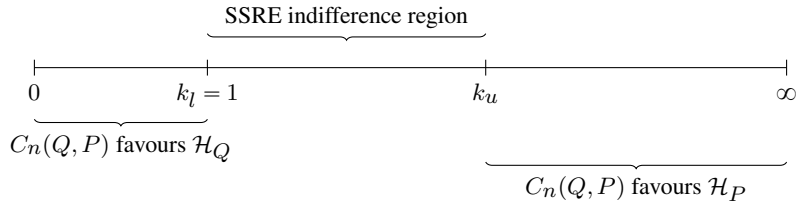

Given this form of the SSRE, we can leverage our theoretical results to understand how to select $k_u$ to control the probability of falsely { favouring P when the benchmark Q holds}. To this end, define the event
$$
\mathcal{T}_{N-j}^P:=\left\{\overline{\mathcal{E}}_{N-j}^{\text{Avg}}\ge k_u\right\},\quad k_u>1,\quad j=0,1\dots,K
$$ which represents termination of SSRE and selection of method P at time $N-j\le N$. We can control the behavior of $\Pr\left(\mathcal{T}_{N}^P\mid { \mathcal{H}^{1}_Q,\dots,\mathcal{H}_Q^N}\right)$ - the probability of selecting method P when {$\mathcal{H}_Q$ holds and} the benchmark method is more accurate - by using Theorem \ref{meanevaluesize}. In particular, fix some $\beta\in(0,1/2)$ and define $\beta_Q=\frac{\beta}{1-\beta}$, to be the error we are willing to tolerate in falsely selecting P when in fact Q is more accurate; i.e.,
$$
\beta_Q=\frac{\beta}{1-\beta}=\Pr\left(\mathcal{T}_{N}^P\mid \mathcal{H}^{1}_Q,\dots,\mathcal{H}_Q^N\right).
$$Then, by Theorem \ref{meanevaluesize}, we know that $\beta_Q\le 1/k_u$, and we can control this error by setting $k_u=1/\beta_Q=(1-\beta)/\beta$.
{Furthermore, by Theorem \ref{meanevaluepower} we have that for each $\omega\in(0,|h_{\mathsf{P}}|)$ the event $\overline{\mathcal{E}}^{(\omega)}_N$ satisfies
$\Pr\left(\overline{\mathcal{E}}^{(\omega)}_N \geq 1/\pi\right)\ge 1-o(1)$ if $\mathcal{H}_Q^{j}$ is false for some $j\le N$, and thus that the probability $\Pr\left(\mathcal{T}_{N}^P\mid \mathcal{H}^{1}_P,\dots,\mathcal{H}_P^N\right)\ge 1-o(1)$ for any $k_u=(1-\beta)/\beta$, $\beta\in(0,1/2)$. Hence we can conclude that when in fact $\mathcal{H}_P$ holds, and method P is more accurate than the benchmark, the probability that we fail to select method P converges to zero.}

{Now observe that the roles of model Q and model P can easily be reversed since the previous constructions and derivations are symmetric in these arguments. Interchanging Q and P and setting method P as the benchmark, a value $k_l$ such that the probability of selecting method Q when $\mathcal{H}_P$ holds, $\Pr\left(\mathcal{T}_{N}^Q\mid {\mathcal{H}^{1}_P,\dots,\mathcal{H}_P^N}\right)$, is controlled, and the probability of selecting method Q when the benchmark does not hold, $\Pr\left(\mathcal{T}_{N}^Q\mid {\mathcal{H}^{1}_Q,\dots,\mathcal{H}_Q^N}\right)$, converges to one, can be determined. In this way a SSRE scheme with boundary constants $k_l$ and $k_u$ that is guaranteed to have desirable statistical properties can be constructed.}

\subsection{Examples}
We now examine the empirical accuracy of our proposed SSRE approach across several common examples in the forecasting literature. In each example, we compare the benchmark model, denoted by $Q$, against a single alternative, denoted by $P$. We compare our SSRE approach  against the most common approach for testing the accuracy of forecasts: a one-sided version of the Diebold and Mariano test, which tests that the benchmark, $Q$, is not inferior to the alternative, $P$. While the two testing approaches are slightly different in their null and alternative hypotheses - we treat a sequence of conditional hypothesis where as the DM test treats a single average hypothesis - such a comparison is warranted as the DM test is far and away the most popular method for testing forecast accuracy.

The key feature of the DM test is that its validity requires the strong assumption that the difference in scores follows a weakly stationary process; which is known to not be satisfied when models are estimated or when working with non-stationary series. In contrast, our testing approach makes no such requirements and builds these features directly into the testing framework. Consequently, it is likely that our approach will deliver accurate results in exactly the situations where standard testing methods will fail.

Across the experiments that follow, we compare the accuracy of {the different models} in terms of  log-scores and  root-mean-squared error. We compare the accuracy of our SSRE scheme and the DM test across $M=1000$ replications from different assumed models, which we present in the subsequent subsections.  Across each example, the SSRE approach uses the testing protocol discussed in the previous section for each experiment; we also set $\beta=1/10$, so that $k_u\approx 11.11$, and conduct the test with this fixed threshold.

For each experiment, the replicated dataset is based on $T=1000$ total observations, which we break in to an in-sample fitting dataset comprised of  $R=T/2=500$ observations, and an out-of-sample test set based on the remaining observations. For simplicity, we consider a fixed in-sample period and do not consider rolling or expanding windows, and fit each model with maximum likelihood. We remark that nothing in the construction of the SSRE prohibits such schemes, and that fixed windows are merely used to avoid model the computational cost of model refitting. For the SSRE approach, we reserve only the last $N=100$ periods to calculate the SSRE test statistics and conduct the sequential testing approach, while the DM test statistic is based on all $T-R=500$ observations in the test set. This feature should positively bias the DM test results due to them having a larger test set, and thus closer to the asymptotic regime. When calculating the DM test statistic we estimate the variance of the loss using HAC based methods, and we reject the null if the corresponding test statistic is greater than $\Phi^{-1}(1-\beta)=1.28$, i.e., if the average difference in the losses is larger than the 10\% quantile of the standard normal, which is a standard rejection region for the one-sided DM test when the losses are negatively oriented.

To gauge the accuracy of our SSRE approach we simulate data from a fixed version of the benchmark Q and investigate the Monte Carlo frequency with which this null is rejected and compare that to the theoretical results, which, by Theorem \ref{meanevaluesize}, under our specific choices should be controlled and less than $\beta/(1-\beta)\approx 11\%$ across all experiments. We then verify the accuracy of Theorem \ref{meanevaluepower} by simulating data under a fixed version of model P and calculating the same rejection frequency, which should diverge as $T$ diverges.

We now briefly present the different simulation experiments and then present the results of these experiments.

\subsubsection{Example 1: Unit Root versus AR(1)}
Consider comparing the accuracy of a benchmark forecasting model, $Q$, given by a unit root process, against the  alternative forecasting model, $P$, given by an autoregressive process of order 1 (AR(1)). Both forecasts are then based on the {specification}:
\begin{equation}\label{eq:model1}
Y_t= \rho Y_{t-1}+\sigma\varepsilon_t,\quad \varepsilon_t\sim N(0,1),\quad t=2,3,\dots,T,	
\end{equation}
where model P enforces $|\rho|<1$, while model Q sets $\rho=1$. In this case, method Q corresponds to the forecasting model $Y_t\mid Y_{t-1}\sim N(\mu+Y_{t-1},\sigma^2)$, while method P corresponds to $Y_t\mid Y_{t-1}\sim N(\mu+\rho Y_{t-1},\sigma^2)$, where $|\rho|<1$. We fix $\sigma=2$ in all experiments, and consider $\rho\in\{1,0.90\}$. The case where $\rho=1$ corresponds to model Q being more accurate than model $P$, while the case $\rho=0.90$ covers the converse. Under the stipulated choice of $k_u$, our theoretical results imply that the empirical rejection frequency should be less than $11\%$ in the first case $(\rho=1)$, while in the second ($\rho=0.90$) our results suggest that the empirical rejection frequency should be large.

\subsubsection{Example 2: AR(2) versus ARMA(2,1)}
In the second example, we generate data from the ARMA(2,1) process:
 \begin{equation}\label{eq:model2}
 	Y_t= \rho_1 Y_{t-1}+\rho_2 Y_{t-2}+\theta\varepsilon_{t-1}+\sigma\varepsilon_t,\quad \varepsilon_t\sim N(0,1),\quad t=2,3,\dots,T.
 \end{equation}We consider two forecasting methods: a benchmark model, $Q$, based on fitting an ARMA(2,1) model, and an alternative, $P$, based on fitting an AR(2). In this example, we use the ARMA(2,1) model to explore situations where neither model should be favored, and so we require that both models deliver forecasts that are of similar accuracy. Following \citet{poskitt:1987}, we know that if data is generated from the  ARMA(2,1) process with parameters $\rho_1=1.4$, $\rho_2=-.6$ and  $\theta_1=0.285$, the resulting forecasts based on the AR(2) model may actually be favored over those of the true ARMA(2,1) model in small-to-moderate samples. Hence, such a setting would deliver a case where the benchmark method Q is indeed more accurate than method P, but where detection of this should be difficult. {Conversely, it was also shown by \citet{poskitt:1987} that if we fit a misspecified AR(2) model to the ARMA(2,1) process the pseudo-true parameter values are given by $\rho_1=1.50$, and $\rho_2=-0.70$}. Thus, if we were to instead generate data from the ARMA(2,1) model under these values of $\rho_1$ and $\rho_2$, while setting $\theta=0$, we would have a situation where the alternative model P should actually be favored over the benchmark $Q$, but where the two models will again produce extremely similar predictions, and so we would not reject the benchmark model Q in such a case.

\subsubsection{Example 3: Forecast Combinations}
As our last example, we compare the case where forecasts are generated from a combination of {time series models using} regression. Forecast combination is one of the most common approaches to empirical forecasting, and testing forecast accuracy in such settings has many well-known issues. In this experiment, the observed data is generated according to
\begin{flalign*}
Y_t=&\omega X_{1t}+(1-\omega)X_{2t}+\sigma\varepsilon_t,\quad \varepsilon_t\sim N(0,1),\quad t=2,3,\dots,T
\\X_{1t}=&0.50\epsilon_{t-1}+0.285\epsilon_{t-2}+\epsilon_t,\quad \epsilon_t\sim N(0,1),\quad t=3,\dots,T\\
X_{2t}=&0.50\nu_{t-1}-2\times0.285\nu_{t-2}+\nu_t,\quad \nu_t\sim N(0,1),\quad t=3,\dots,T.
\end{flalign*}
{The data generating process is designed to mimic a scenario involving alternative leading indicators,} and we do not {attempt} to model the time-series dependence in $X_{1t}$ and $X_{2t}$ { but treat them as exogenous regressors.} Our only goal is to produce point forecasts for $Y_{t+1}$ and so we only consider forecast accuracy in terms of MSE.\footnote{A combination of the models based on the individual model densities will not deliver a predictive density in the correct class, and so we forego analysis using the log score in this example.} Our benchmark method Q is an equally weighted combination of forecasts based on fitting individual regressions of $Y_{t}$ on $X_{1t}$, and $Y_{t}$ on $X_{2t}${. This is tested against an optimally weighted forecast combination based on minimizing the sum of in-sample squared forecast errors, method P}. This particular {alternative} is chosen since in this case the predictive model based on the equally-weighted combination{, model $Q$,} coincides with the {predictive model $P$} when we set $\omega=1/2$. Given this, we consider two scenarios: in the first case we generate data so that {method Q} is approximately correct, which corresponds to setting $\omega=1/2$; in the second, we set $\omega=1/4$ so that the forecast model Q is less accurate than model $P$.

\subsubsection{Results}
We give the results across the different examples in Table \ref{tab:one}, which contains results for which method is more accurate, Q or P, as well as for log-score and MSE, except for Example 3 where we only treat MSE. Across both examples and scores, when model Q is indeed more accurate than model $P$, our test controls the error associated with incorrectly choosing model $P$. Conversely, when model P is more accurate than model $Q$, our test clearly favours model $P$. We remind the reader that in Example 2, the forecasting models are essentially equivalent under the chosen parameterizations and so we should expect that {differentiating between the two models is not feasible}; i.e., under the two designs the rejection rates should be similar, and if they are not, this indicates that the proposed testing method is incorrectly favoring one model over the other.

Comparing our results to those of the DM test, we see that the results of the DM test are more varied. In each scenario we analyze, the size of the DM test is much lower than the nominal level of 10\% used when conducting the test. Such a result suggests that the underlying conditions on which the theoretical behavior of the test is founded are invalid. Generally, when the alternative is correct the DM test accounts for this difference adequately, with two notable exceptions. In Example 1, the SSRE approach has a significantly higher MSE rejection rate than the DM test, and in Example 2 under the log score, the DM test chooses the alternative model about 20\% of the time, when in fact the two models deliver equivalent forecasts under the chosen parameterizations.

These findings support the existing literature that DM-type tests may not be appropriate in complex forecasting comparison settings with non-regular models and estimated parameters. In contrast, we note that our testing approach does not suffer these same issues. Theorem \ref{meanevaluesize} says that if method Q is more accurate, then we can control the probability of falsely selecting method P, but if model P is indeed more accurate than model $Q$, then with probability converging to one we will obtain enough evidence to reject model $Q$. Furthermore, no assumptions about the nature of Q and P are made in our setting.
\begin{table}[!htp]\centering
	\caption{Comparison of SSRE and one sided DM tests. The setting where the benchmark model Q is more accurate is denoted by $\mathcal{H}_Q$, and $\mathcal{H}_P$ denotes when the alternative model P is more accurate. The average rejection probabilities under $\mathcal{H}_Q$ and $\mathcal{H}_P$ for SSRE and DM are presented in columns SSRE and DM respectively.}\label{tab:one}
	\scriptsize
	\begin{tabular}{lrrrrrrrr}\toprule
		& &\multicolumn{2}{c}{Example 1} &\multicolumn{2}{c}{Example 2} &\multicolumn{2}{c}{Example 3} \\\cmidrule{3-8}
		& &SSRE &DM &SSRE &DM &SSRE &DM \\
		\multirow{2}{*}{MSE} &$\mathcal{H}_Q$ &0.010 &0.003 &0.007 &0.001 &0.050 &0.020 \\
		&$\mathcal{H}_P$ & 0.950& 0.810& 0.012&0.003 &0.470 &0.460 \\\cmidrule{1-8}
		\multirow{2}{*}{Log Score} &$\mathcal{H}_Q$ &0.001 &0.001 &0.050 &0.001 &n/a &n/a \\
		&$\mathcal{H}_P$ & 1.000& 1.000&0.001 &0.200 &n/a &n/a \\
		\bottomrule
	\end{tabular}
\end{table}

\section{Conclusion}\label{conclusion}

This paper has considered the theoretical analysis of sequential forecast production and evaluation based on proper scoring rules. We have shown that it is feasible to deliver reliable sequential tests of forecast accuracy under much weaker assumptions than those used in many testing approaches, by leveraging a particular type of test statistic and evaluation framework. The key novelty in this framework is the recognition that the test statistic we have formulated encodes forecast accuracy in a way that it can be represented as a generalized e-variable (\citealp{dey2024anytime}, \citealp{dey2024multiple}) that depends on a particular ``learning rate''. We demonstrate that this learning rate encodes which forecasting model is more accurate, and discuss how it can be set in practice. In leveraging the behavior of e-variables, we show that our sequential testing framework controls the family-wise error rates associated with the testing procedure.

In this manuscript, we have considered the application of this testing procedure to the empirically relevant case where a benchmark forecasting model is compared against some alternative method, and where the null hypothesis is that this benchmark model is at least as accurate as the alternative. However, in theory, the proposed framework should extend beyond this setting. An exciting possibility is the use of this framework for the construction of multi-comparisons across different models, with the ultimate aim being the construction of a sequential model confidence set \citep[see][for a discussion of the concept of a model confidence set]{hansen2011model}. To keep this current paper brief, we have not explored this exciting idea in this manuscript but speculate that it should be feasible to extend our current framework to construct such model confidence sets, while maintaining control on the error rates associated with incorrectly including models in the said set.

\begin{appendix}\label{app:proofs}
\section*{Proofs}
\begin{paragraph}{Proof of Lemma \ref{lem:size}}
Firstly, from the definition of $\mathcal{H}_Q$, for any $n\geq 1$, and $\mathcal{P}\in\mathcal{H}_Q$, $R_n(Q,P)$ is an e-variable: i) it is non-negative; 2) if $\mathcal{P}\in\mathcal{H}_Q$, then $\mathbb{E}_{Y\sim\mathcal{P}}[R_n(Q,P)]\le 1$ for any $n\in\mathbb{N}$. Since each $R_n(Q,P)$ is adapted to $\mathscr{B}_n$, by the tower property of conditional expectations $C_n(Q,P)=\prod_{m=1}^n R_n(Q,P)$, $n\in\mathbb{N}$, is a non-negative super-martingale. In particular, since $R_n(Q,P)\geq0$, and since under $\mathcal{H}_Q$ we have $\mathbb{E}_{Y\sim\mathcal{P}}[R_n(Q,P)\mid \mathscr{B}_{n-1}]\le 1$, it follows that
$$
\mathbb{E}_{Y\sim\mathcal{P}}[C_n(Q,P)]=\mathbb{E}_{Y\sim\mathbb{P}}\left[\mathbb{E}_{Y\sim\mathbb{P}}\left[\prod_{m=1}^n R_m(Q,P)\mid \mathcal{B}_{n-1}\right]\right]=\mathbb{E}_{Y\sim\mathcal{P}}\prod_{m=1}^n\mathbb{E}_{Y\sim\mathcal{P}}[R_m(Q,P)\mid\mathscr{B}_{m-1}],
$$with the convention that $\mathbb{E}_{Y\sim\mathcal{P}}[R_1(Q,P)\mid\mathscr{B}_0]=\mathbb{E}_{Y\sim\mathcal{P}}[R_1(Q,P)]$. Under $\mathcal{H}_{Q}$, $\mathbb{E}_{Y\sim\mathcal{P}}[R_m(Q,P)\mid\mathscr{B}_{m-1}]\le 1$ for each $m\geq1$, we have
$
\mathbb{E}_{Y\sim\mathcal{P}}[C_n(Q,P)]\le 1$. For any $\mathcal{P}\in\mathbb{P}$, the result then follows by Ville's inequality.\hfill \qed
\end{paragraph}
\begin{paragraph}{Proof of Proposition \ref{prop1}:}
Assume that for each $m$ the distribution of $C_m(Q,P)$ is nondegenerate and that the probability of at least one of $E^Q_m$ or $E^P_m$ is non-zero, $m=1,2,\ldots$. Let
$p_m=\Pr(k_l< C_m(Q,P)< k_u))$.  Then $\Pr(N>n)\leq\varrho_n=\prod_{m=1}^np_m$ where $h_m<1$, $m=1,\ldots,n$, and the sequence $\varrho_n$, $n=1,2,\ldots$, converges as $n$ increases since it is monotonically nonincreasing and bounded below by zero.

Suppose that $\lim_{n\rightarrow\infty}\prod_{m=1}^np_m>0$. Cauchy's condition for the convergence of a product implies that $\prod_{m=1}^np_m$ converges if, and only if, for every $\epsilon>0$ there exists an $n_\varepsilon$ such that $|p_{n_\epsilon+1}p_{n_\epsilon+2}\cdots p_{n}-1|<\epsilon$ for all $n>n_\epsilon$. This implies that $p_n\rightarrow 1$ as $n_\epsilon\rightarrow\infty$, contradicting the condition that $p_n<1$. We can therefore conclude that the product $\prod_{m=1}^np_m$ `diverges' and $\lim_{n\rightarrow\infty}\prod_{m=1}^np_m=0$. It follows that $\lim_{n\rightarrow\infty}\Pr(N>n)=0$ and hence that the SSRE process terminates with probability one. Moreover, if we set $\varrho_\epsilon=\sqrt[n_\epsilon]{\prod_{m=1}^{n_\epsilon}p_m}$, then we have that for all $n>n_\epsilon$
$$
\Pr(N>n)\leq\left(\prod_{m=1}^{n_\epsilon}p_m\right)\left(\prod_{m=n_\epsilon+1}^np_m\right)=\varrho_\epsilon^{n_\epsilon}\cdot\left(\prod_{m=n_\epsilon+1}^np_m\right)<\exp\{n_\epsilon\log(\varrho_\epsilon)\}
$$
where $0<\varrho_\epsilon<1$.\hfill \qed
\end{paragraph}
\begin{paragraph}{Proof of Corollory \ref{corol1}:} By definition
\begin{align*}
 \mathbb{E}_{\mathcal{P}}[\exp(Ns)]= & \sum_{n=1}^\infty\exp(ns)\Pr(N=n) \\
  = &\sum_{n=1}^{n_\epsilon}\exp(ns)\Pr(N=n) +\sum_{n=n_\epsilon}^\infty\exp(ns)\{\Pr(N>n+1)-\Pr(N>n)\}\,.
\end{align*}
The second term can be rewritten as $\sum_{k=0}^\infty\exp((n_\epsilon+k)s)\{\Pr(N>n_\epsilon+k+1)-\Pr(N>n_\epsilon+k)\}$. By Proposition \ref{prop1} $\Pr(N>n)< \exp\{n_\epsilon\log(\varrho_\epsilon)\}$ for all $n\geq n_\epsilon+1$, We also have that $\Pr(N>n_\epsilon)\leq \exp\{n_\epsilon\log(\varrho_\epsilon)\}$ and $\Pr(N>n+1)\leq\Pr(N>n)$. So if $\Pr(N>n_\epsilon+k)\leq\exp\{(n_\epsilon+k-1)\log(\varrho_\epsilon)\}$ then $\Pr(N>n_\epsilon+k+1)\leq\exp\{(n_\epsilon+k)\log(\varrho_\epsilon)\}\exp\{-2\log(\varrho_\epsilon)\}$, and hence we find that for $k=0,1,2,\ldots$
$$
|\Pr(N>n_\epsilon+k+1)-\Pr(N>n_\epsilon+k)|\leq\exp\{(n_\epsilon+k-1)\log(\varrho_\epsilon)\}[\exp\{-\log(\varrho_\epsilon)\}+1]\,.
$$
For $s$ such that $s<-\log(\varrho_\epsilon)$ the modulus of the second term in the previous expansion of $\mathbb{E}[\exp(Ns)]$ can therefore be bounded above by
\begin{align*}
  c_\epsilon\sum_{k=0}^\infty\exp((n_\epsilon+k)s)\exp\{(n_\epsilon+k-1)\log(\varrho_\epsilon)\} = & c_\epsilon\exp\{-\log(\varrho_\epsilon)\}\sum_{k=0}^\infty[\exp\{s+\log(\varrho_\epsilon)\}]^{n_\epsilon+k} \\
  = & C_\epsilon\sum_{k=0}^\infty[\exp\{s+\log(\varrho_\epsilon)\}]^k \\
= &\frac{C_\epsilon}{(1-\exp\{s+\log(\varrho_\epsilon)\})}
\end{align*}
where $c_\epsilon=1+\exp\{-\log(\varrho_\epsilon)\}$ and $C_\epsilon=c_\epsilon\exp\{n_{\epsilon}s+(n_{\epsilon}-1)\log(\varrho_\epsilon)\}$. Thus $\mathbb{E}[\exp(Ns)]$ converges to a finite constant for all $s\in(-\infty,-\log(\varrho_\epsilon))$.\hfill \qed
\end{paragraph}

\begin{paragraph}{Proof of Lemma \ref{lem:intermediate}}
	First, let $\Delta_{\mathsf{P}}(Q,P)=\EP \Delta_n(Q,P)$. Decompose $\log C_n^{(\omega)}(Q,P)$ as
	\begin{flalign*}
		\log C_n^{(\omega)}(Q,P)&=\log C_n^{(\omega)}(Q,P)-\omega\EP\Delta_n(Q,P)+\omega\EP\Delta_n(Q,P)\\
		&=\omega \{\Delta_n(Q,P)-\Delta_{\mathsf{P}}(Q,P)\}+\omega\Delta_{\mathsf{P}}(Q,P)\\
		&=\omega \mathcal{D}_n(Q,P)+\omega\Delta_{\mathsf{P}}(Q,P)
	\end{flalign*}	where $\mathcal{D}_n(Q,P)=\{\Delta_n(Q,P)-\Delta_{\mathsf{P}}(Q,P)\}$.
	Now, consider
	\begin{flalign*}
		\Pr\{C_{n}^{(\omega)}(Q,P)< \pi^{-1}\}=&\Pr\left\{\log C_{n}^{(\omega)}(Q,P)< -{\log \pi}\right\}\\
		=&\Pr\left\{\frac{1}{n}\mathcal{D}_n(Q,P)+\frac{1}{n}\Delta_{\mathsf{P}}(Q,P)< -\frac{\log \pi}{n\omega}\right\}
		\\\le &\underbrace{\Pr\left\{\frac{1}{n}\mathcal{D}_n(Q,P)< 0 \right\}}_{\text{Term 1}}+ \underbrace{\Pr\left\{\frac{1}{n}\Delta_{\mathsf{P}}(Q,P)< -\frac{\log \pi}{n\omega} \right\}}_{\text{Term 2}}
	\end{flalign*}where the inequality follows from the union bound. Since $\pi\in(0,1)$, we have $-\log\pi\ge0$. By the hypothesis of the result, $\lim_nn^{-1}\Delta_{\mathsf{P}}(Q,P)=\vartheta>0$, and Term 2 in the above equation converges to zero for all $\omega>0$ as $n\rightarrow\infty$.
	
	It remains to analyze Term 1. To simplify notation, write $\overline{\Delta}_n(Q,P):=n^{-1}\Delta_n(Q,P)$ and $\overline\Delta_{\mathsf{P}}(Q,P):=\EP \overline\Delta_n(Q,P)$. By hypothesis, for any $\varepsilon>0$,
	\begin{flalign}\label{eq:prob}
		\Pr\left\{\overline{\Delta}_n(Q,P)\;:\;\left|\overline{\Delta}_n(Q,P)- \overline\Delta_{\mathsf{P}}(Q,P)\right|\ge \varepsilon\right\}=o(1)\,.	
	\end{flalign}Decompose the event above as
	\begin{flalign*}
		\{|\overline{\Delta}_n(Q,P)-\overline\Delta_{\mathsf{P}}(Q,P)|\ge \varepsilon\}
		&=\left\{\left[\overline{\Delta}_n(Q,P)-\overline\Delta_{\mathsf{P}}(Q,P)\right]\ge \varepsilon\right\}\cup \left\{\left[\overline{\Delta}_n(Q,P)-\overline\Delta_{\mathsf{P}}(Q,P)\right]\le \varepsilon\right\}
		\\&=	\left\{\left[\overline{\Delta}_n(Q,P)-\overline\Delta_{\mathsf{P}}(Q,P)\right]\ge \varepsilon\right\}\cup \left\{\overline{\Delta}_n(Q,P)\le \overline\Delta_{\mathsf{P}}(Q,P)+ \varepsilon\right\}
	\end{flalign*}
	where the second line comes from re-arranging the first set. If $\overline{\Delta}_n\in\left\{\overline{\Delta}_n(Q,P)\le \overline\Delta_{\mathsf{P}}(Q,P)+ \varepsilon\right\}$ for all $\varepsilon>0$, then $	\overline{\Delta}_n\in\left\{\overline{\Delta}_n(Q,P)\le \overline{\Delta}_{\mathsf{P}}(Q,P)\right\}$. Hence, from \eqref{eq:prob} and the above we have
	$$
	\Pr\left\{\frac{1}{n}\mathcal{D}_n(Q,P)< 0 \right\}=\Pr\left\{\overline{\Delta}_n(Q,P)<\overline{\Delta}_{\mathsf{P}}(Q,P)\right\}=o(1)\,,
	$$which proves that
	\begin{flalign*}
		\Pr\{C_{n}^{(\omega)}(Q,P)< \pi^{-1}\}\le o(1)\,.
	\end{flalign*}Taking compliments then yields the stated result.
\end{paragraph}

\end{appendix}


\bibliographystyle{imsart-nameyear}
\bibliography{aos-SSREpapeV9-DavidDon-08-05-25.bbl}

\end{document}